\newtheorem{theoremcounter}{Theorem Counter}[section]
\theoremstyle{definition}
\newtheorem{definition}[theoremcounter]{Definition}
\newtheorem{remark}[theoremcounter]{Remark}
\newtheorem{example}[theoremcounter]{Example}
\theoremstyle{plain}
\newtheorem{corollary}[theoremcounter]{Corollary}
\newtheorem{theorem}[theoremcounter]{Theorem}
\numberwithin{equation}{section}
\newcommand{\Z}{\mathbb{Z}}
\newcommand{\scB}{\mathscr{B}}
\newcommand{\pmat}[1]{\begin{pmatrix}#1\end{pmatrix}}
\newcommand{\smat}[1]{\bigl(\begin{smallmatrix}#1\end{smallmatrix}\bigr)}
\def\piros#1{{\color{red}#1}}%
\def\kek#1{{\color{blue}#1}}%
\def\sts#1#2{\genfrac{\{}{\}}{0pt}{}{#1}{#2}}
\newcommand{\wclr}{w_{\mathcal{C}}^{\mathrm{lr}}}
\newcommand{\wcrb}{w_{\mathcal{C}}^{\mathrm{br}}}
\newcommand{\wcRL}{w_{\mathcal{C}}^{\mathrm{RL}}}
\newcommand{\wtst}{w_{\mathcal{T}}^{\mathrm{st}}}
\newcommand{\wtl}{w_{\mathcal{T}}^{\leftarrow}}
\newcommand{\wttl}{w_{\widetilde{\mathcal{T}}}^{\leftarrow}}
\newcommand{\wttd}{w_{\widetilde{\mathcal{T}}}^{\downarrow}}
\newcommand{\wtc}{w_{\mathscr{T}}^{\mathrm{ch}}}
\newcommand{\welr}{w_{\mathcal{E}}^{\mathrm{lr}}}
\begin{document}

\title[]{Bijective enumerations for symmetrized poly-Bernoulli polynomials} 

\author{Minoru Hirose} 
\address{Institute for Advanced Research, Nagoya University,
Furo-cho, Chikusa-ku, Nagoya 464-8601, Japan}
\email{minoru.hirose@math.nagoya-u.ac.jp}

\author{Toshiki Matsusaka}
\address{Institute for Advanced Research, Nagoya University,
Furo-cho, Chikusa-ku, Nagoya 464-8601, Japan}
\email{matsusaka.toshiki@math.nagoya-u.ac.jp} 

\author{Ryutaro Sekigawa} 
\address{Graduate School of Science and Technology, Tokyo University of Science,
Noda, Chiba 278-8510, Japan}
\email{sekigawa.r@gmail.com} 

\author{Hyuga Yoshizaki} 
\address{Graduate School of Science and Technology, Tokyo University of Science,
Noda, Chiba 278-8510, Japan}
\email{yoshizaki\_hyuuga@ma.noda.tus.ac.jp} 

\subjclass[2020]{Primary 05A19; Secondary 11B68}
\thanks{The first author was supported by JSPS KAKENHI Grant Number JP18K13392, and the second author was supported by JSPS KAKENHI Grant Number JP20K14292. 
}



\maketitle

\begin{abstract}
	Recently, B\'{e}nyi and the second author introduced two combinatorial interpretations for symmetrized poly-Bernoulli polynomials. In the present study, we construct bijections between these combinatorial objects. We also define various combinatorial polynomials and prove that all of these polynomials coincide with symmetrized poly-Bernoulli polynomials.
\end{abstract}

\section{Introduction}

For non-negative integers $n, k \geq 0$, the (normalized) \emph{symmetrized poly-Bernoulli polynomial} $\widehat{\scB}_n^k(x)$ is defined by
\begin{align}\label{SPB-exp}
	\widehat{\scB}_n^k(x) = \sum_{j=0}^{\min(n,k)} j! (x+1)^{\overline{j}} \sts{n+1}{j+1} \sts{k+1}{j+1} \in \Z[x].
\end{align}
Here, $\sts{\cdot}{\cdot}$ is the Stirling number of the second kind (see~\cite[Section 2.1]{ArakawaIbukiyamaKaneko2014}), and $(x+1)^{\overline{j}} = (x+1)(x+2) \cdots (x+j)$ is the rising factorial. The prototype, i.e., the symmetrized poly-Bernoulli number, was introduced by Kaneko, Sakurai, and Tsumura~\cite{KanekoSakuraiTsumura2018} to generalize the duality of poly-Bernoulli numbers with negative indices. 

The poly-Bernoulli number was introduced by Kaneko~\cite{Kaneko1997} in 1997. Since Brewbaker~\cite{Brewbaker2008} and Launois~\cite{Launois2005} pointed out that poly-Bernoulli numbers appear in enumeration problems, this topic has been studied from a combinatorial viewpoint (see~\cite{BenyiHajnal2015}). By definition, the coefficients of symmetrized poly-Bernoulli polynomials are non-negative integers. Therefore, it is natural to ask about their combinatorial meanings.

Recently, B\'{e}nyi--Matsusaka~\cite{BenyiMatsusaka2021} introduced two combinatorial objects to answer this question. Inspired by this research, we provide further combinatorial aspects for symmetrized poly-Bernoulli polynomials. Furthermore, we answer some problems left unsolved by B\'{e}nyi--Matsusaka. The remainder of the present article is organized as follows.

First, we recall the two combinatorial objects, (barred) Callan sequences $\mathcal{C}_n^k$ and alternative tableaux $\mathcal{T}_n^k$ in \cref{s2}. Both of these objects define the same polynomial $\widehat{\mathscr{B}}_n^k(x)$. However, the proof is indirect due to using recurrence relations, and the authors~\cite{BenyiMatsusaka2021} failed to find a direct bijection between these two objects. We therefore present two types of combinatorial bijections.

More precisely, we introduce the following. 

\begin{definition}
	For a pair $(\mathcal{P}, w)$ of a finite set of combinatorial objects and a weight function $w: \mathcal{P} \to \mathbb{Z}_{\geq 0}$, we define the polynomial
\[
	\mathcal{P}(x) = \mathcal{P}(x; w) = \sum_{\lambda \in \mathcal{P}} x^{w(\lambda)}.
\]
Let $(\mathcal{P}_1, w_1)$ and $(\mathcal{P}_2, w_2)$ be two such pairs. A function $f : \mathcal{P}_1 \to \mathcal{P}_2$ is called a \emph{bijection between $(\mathcal{P}_1, w_1)$ and $(\mathcal{P}_2, w_2)$} if $f$ is bijective and satisfies $w_2(f(\lambda)) = w_1(\lambda)$ for any $\lambda \in \mathcal{P}_1$.
\end{definition}

If there exists a bijection between $(\mathcal{P}_1, w_1)$ and $(\mathcal{P}_2, w_2)$, then we obtain the equation $\mathcal{P}_1(x; w_1) = \mathcal{P}_2(x; w_2)$. In \cref{s2-2}, we construct a direct bijection $(\mathcal{C}_n^k, \wclr) \to (\mathcal{T}_n^k, \wtst)$, where $\wclr$ and $\wtst$ are weight functions introduced in~\cite{BenyiMatsusaka2021}. In \cref{s3}, we give another bijection $(\mathcal{T}_n^k, \wtl) \to (\mathcal{C}_n^k, \wclr)$ via a sequence of bijections.

Throughout this article, we provide various combinatorial objects and weights. The following table lists the models considered herein.

\begin{table}[H]
\centering
\begin{tabular}{c|l}
	Section & $(\mathcal{P}, w)$ \\
	\hline
	\cref{s2} & $(\mathcal{C}_n^k, \wclr)$, $(\mathcal{T}_n^k, \wtst)$\\
	\cref{s3-1} & $(\mathcal{T}_n^k, \wtl)$, $(\widetilde{\mathcal{T}}_n^k, \wttl)$\\
	\cref{s3-2} & $(\mathscr{T}_n^k, \wtc)$, $(\mathcal{C}_n^k, \wcrb)$\\
	\cref{s4-1} & $(\mathcal{E}_n^k, \welr)$\\
	\cref{s4-2} & $(\mathcal{C}_n^k, \wcRL)$\\
	\cref{s4-3} & $(\widetilde{\mathcal{T}}_n^k, \wttd)$
\end{tabular}
\end{table}

\begin{theorem}\label{main-theorem} 
	The polynomials $\mathcal{P}(x; w)$ defined from the above nine combinatorial models all coincide with the symmetrized poly-Bernoulli polynomial $\widehat{\mathscr{B}}_n^k(x)$.
\end{theorem}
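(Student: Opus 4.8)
The plan is to reduce everything to the single implication recorded immediately after the Definition: whenever there is a weight-preserving bijection between $(\mathcal{P}_1, w_1)$ and $(\mathcal{P}_2, w_2)$, the two generating polynomials coincide, $\mathcal{P}_1(x; w_1) = \mathcal{P}_2(x; w_2)$. Consequently, if one regards the nine models as vertices of a graph and draws an edge for each weight-preserving bijection constructed in the paper, then any two models lying in the same connected component have equal polynomials. It therefore suffices to (i) identify one anchor model whose polynomial is already known to equal $\widehat{\scB}_n^k(x)$, and (ii) check that the graph of bijections is connected, so that every model belongs to the anchor's component.

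For the anchor I would use $(\mathcal{C}_n^k, \wclr)$, since B\'enyi--Matsusaka have already shown $\mathcal{C}_n^k(x; \wclr) = \widehat{\scB}_n^k(x)$ (and likewise for $(\mathcal{T}_n^k, \wtst)$), as recalled in \cref{s2}. The remaining task is purely to list the edges supplied by the body of the paper and confirm that they span all nine vertices. The direct bijection of \cref{s2-2} connects $(\mathcal{C}_n^k, \wclr)$ to $(\mathcal{T}_n^k, \wtst)$. The composite bijection of \cref{s3} connects $(\mathcal{T}_n^k, \wtl)$ to $(\mathcal{C}_n^k, \wclr)$ while passing through the intermediate models $(\widetilde{\mathcal{T}}_n^k, \wttl)$ of \cref{s3-1} and $(\mathscr{T}_n^k, \wtc)$, $(\mathcal{C}_n^k, \wcrb)$ of \cref{s3-2}, so these four additional pairs all enter the anchor component. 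Finally, the three bijections of \cref{s4-1}, \cref{s4-2}, and \cref{s4-3} attach $(\mathcal{E}_n^k, \welr)$, $(\mathcal{C}_n^k, \wcRL)$, and $(\widetilde{\mathcal{T}}_n^k, \wttd)$, respectively, to models already placed in the component. Once every vertex is shown to be reachable from the anchor, transitivity of equality yields $\mathcal{P}(x; w) = \widehat{\scB}_n^k(x)$ for all nine pairs simultaneously.

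The work is thus entirely deferred to the individual bijection constructions, and the theorem itself is their bookkeeping consequence. The main obstacle is not in this assembly step but in guaranteeing the \emph{weight}-preservation of each constituent bijection, since an edge of the graph is legitimate only when $w_2(f(\lambda)) = w_1(\lambda)$ holds identically. The most delicate of these checks concern the composite map of \cref{s3}: there the equality of weights must survive a chain of several intermediate transformations through differently weighted tableau models, so one must verify that the statistic being tracked transforms correctly at every link in the passage $\wtl \rightsquigarrow \wttl \rightsquigarrow \wtc \rightsquigarrow \wcrb \rightsquigarrow \wclr$. Provided each section certifies its own map as a genuine bijection satisfying this identity, the connectivity argument closes the proof with no further computation.
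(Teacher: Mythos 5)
Your overall strategy---anchor one model at $\widehat{\scB}_n^k(x)$ and propagate equality along weight-preserving bijections---is indeed how most of the paper is organized, and your edge list is correct for seven of the nine models: $(\mathcal{C}_n^k,\wclr)$, $(\mathcal{T}_n^k,\wtst)$, $(\mathcal{T}_n^k,\wtl)$, $(\widetilde{\mathcal{T}}_n^k,\wttl)$, $(\mathscr{T}_n^k,\wtc)$, $(\mathcal{C}_n^k,\wcrb)$, and $(\widetilde{\mathcal{T}}_n^k,\wttd)$ all lie in one connected component containing your anchor. The genuine gap is your claim that \cref{s4-1} supplies a bijection attaching $(\mathcal{E}_n^k,\welr)$ to a model already in that component. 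It does not: the map $\psi$ of \cref{psi-definition} sends $\mathcal{E}_n^k$ into $\mathcal{E}_{\leq n}^{k-1} = \bigcup_{i=0}^n \mathcal{E}_i^{k-1}$, is $n$-to-$1$ on one of its three cases, and is used only to derive the recurrence \cref{recurrence}; the paper nowhere constructs a weight-preserving bijection from $(\mathcal{E}_n^k,\welr)$, or from $(\mathcal{C}_n^k,\wcRL)$, to any of the other seven models. (The rook-placement bijection of \cref{s4-2} is a bijection of the underlying sets $\mathcal{C}_n^k \to \mathcal{E}_n^k$, but it carries $\wcRL$ to $\welr$, not $\wclr$ to $\welr$.) So the bijection graph actually has \emph{two} components, with $\{(\mathcal{E}_n^k,\welr),(\mathcal{C}_n^k,\wcRL)\}$ isolated from the rest, and your connectivity argument cannot reach it.

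Consequently the theorem is not purely a bookkeeping consequence of bijections: the second component needs its own anchor, established by a non-bijective argument. The paper gives two such anchors: \cref{Excedance-theorem} shows that $\mathcal{E}_n^k(x;\welr)$ satisfies the recurrence \cref{recurrence} with the same initial values, hence equals $\widehat{\scB}_n^k(x)$; alternatively, at the end of \cref{s4-2}, direct enumeration shows that $\mathcal{C}_n^k(x;\wcRL)$ satisfies the explicit formula \cref{SPB-exp}. Either one anchors the second component, after which your transitivity argument does finish the proof. A smaller inaccuracy worth fixing: in \cref{s3} the model $(\mathcal{C}_n^k,\wcrb)$ is not an intermediate stop in the chain $\wtl \to \wttl \to \wtc \to \wclr$; it joins the component because $\wcrb$ is by definition the weight obtained by transporting $\wtst$ along the bijections $\mathcal{T}_n^k \to \widetilde{\mathcal{T}}_n^k \to \mathscr{T}_n^k \to \mathcal{C}_n^k$, which yields the edge $(\mathcal{T}_n^k,\wtst) \to (\mathcal{C}_n^k,\wcrb)$.
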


Although \cref{SPB-exp} immediately implies the duality $\widehat{\mathscr{B}}_n^k(x) = \widehat{\mathscr{B}}_k^n(x)$, this is unclear from the combinatorial models of B\'{e}nyi--Matsusaka. As an application of our results, we explain the duality combinatorially in \cref{s4-3}.

\section{A bijection between two combinatorial models} \label{s2}

B\'{e}nyi and the second author~\cite{BenyiMatsusaka2021} introduced combinatorial models for symmetrized poly-Bernoulli polynomial $\widehat{\scB}_n^k(x)$. In this section, we first recall these models, and then provide a bijection between these models. 

\subsection{Double Callan permutations} \label{s2-1}

Throughout this article, let $n$ and $k$ be non-negative integers. 

\begin{definition}
	A \emph{double Callan permutation} of size $n \times k$ is a pair of (possible empty) strings $S_1 = a_1 \cdots a_r$ and $S_2 = b_1 \cdots b_s$ with $r+s = n+k$ such that
	\begin{enumerate}
		\item the terms satisfy $\{a_1, \dotsm a_r\} \sqcup \{b_1, \dots, b_s\} = \{\piros{1}, \dots, \piros{n}, \kek{1}, \dots, \kek{k}\}$ with $r, s \geq 0$,
		\item $a_1$ is blue, and $b_1$ is red, and
		\item all substrings of elements consisting of the same color are in decreasing order.
	\end{enumerate}
	We let $\mathcal{C}_n^k$ denote the set of all double Callan permutations of size $n \times k$.
\end{definition}

\begin{example}
	The following is an example of double Callan permutations of size $7 \times 6$:
	\[
		S_1 = \kek{6} \piros{76} \kek{51} \piros{31} \kek{3} \piros{2} \quad \text{and} \quad S_2 = \piros{4} \kek{4} \piros{5} \kek{2}.
	\]
\end{example}

The double Callan permutations are essentially the same as the barred Callan sequences studied in~\cite{BenyiMatsusaka2021}. Indeed, we can express a pair of strings as
\begin{align}\label{S1S2-exp}
	S_1 = B_1 R_1 \cdots B_\ell R_\ell B_{\ell+1} \quad \text{and} \quad S_2 = R'_1 B'_1 \cdots R'_m B'_m R'_{m+1},
\end{align}
where $R$ and $B$ are substrings consisting of red and blue elements, respectively. Here, $\ell, m \geq 0$ and the substrings $B_{\ell+1}$ and $R'_{m+1}$ could be empty. This expression defines a barred Callan sequence $(B_1; R_1) \cdots (B_\ell; R_\ell) | (B'_m; R'_m) \cdots (B'_1; R'_1) (B_{\ell+1}, \kek{*}; R'_{m+1}, \piros{*})$. Therefore, by reusing the terminology for barred Callan sequences, we refer to $B_{\ell+1}, R'_{m+1}$ as \emph{extra blocks} and refer to other substrings $R, B$ as \emph{ordinary blocks}. Moreover, we refer to a pair of red and blue blocks having the same sub-(super-)script as a \emph{Callan pair}. Note that if $\ell = 0$ (resp. $m = 0$), then the block $B_1$ (resp. $R'_1$) is the extra block.

\begin{definition}\label{ordinary}
For each double Callan permutation $\lambda = (S_1, S_2) \in \mathcal{C}_n^k$ given as in \cref{S1S2-exp}, we define its weight $\wclr (\lambda) \in \Z_{\geq 0}$ using the left-to-right minimum as follows:
\begin{enumerate}
	\item Consider the minimum of each blue substring $B_1, \dots, B_\ell$ in $S_1$ to obtain a sequence $\pi = \pi_1 \pi_2 \cdots \pi_\ell$. Here, we ignore the last blue substring $B_{\ell+1}$.
	\item Count the number of $1 \leq i \leq \ell$ such that if $j < i$ then $\pi_i < \pi_j$.
\end{enumerate}
\end{definition}

For the above example, we obtain the sequence $\pi = \kek{613}$. Then, the weight is given by $\wclr(\lambda) = 2$.

\begin{definition}\label{def-Callan-polynomial}
	For any $n, k \geq 0$, we define the \emph{Callan polynomial} by
	\[
		\mathcal{C}_n^k(x) = \mathcal{C}_n^k(x; \wclr) = \sum_{\lambda \in \mathcal{C}_n^k} x^{\wclr(\lambda)}.
	\]
\end{definition}

The following explicit properties of the Callan polynomials are known.

\begin{theorem} \cite{BenyiMatsusaka2021} \label{Callan-explicit}
	The Callan polynomials satisfy the explicit formula
	\begin{align}\label{Cnk-explicit}
		\mathcal{C}_n^k(x) = \sum_{j=0}^{\min(n,k)} j! (x+1)^{\overline{j}} \sts{n+1}{j+1} \sts{k+1}{j+1},
	\end{align}
	and the generating function
	\begin{align*}
		\sum_{n=0}^\infty \sum_{k=0}^\infty \mathcal{C}_n^k(x) \frac{X^n}{n!} \frac{Y^k}{k!} = \frac{e^{X+Y}}{(e^X + e^Y - e^{X+Y})^{x+1}}.
	\end{align*}
	In particular, $\mathcal{C}_n^k(x) = \widehat{\scB}_n^k(x)$. 
\end{theorem}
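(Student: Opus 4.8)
The plan is to establish the explicit formula \eqref{Cnk-explicit} by a direct enumeration of $\mathcal{C}_n^k$ grouped according to the number $j$ of Callan pairs; the generating function and the identification $\mathcal{C}_n^k(x) = \widehat{\scB}_n^k(x)$ then follow formally. Fix $j$ with $0 \le j \le \min(n,k)$ and decompose a double Callan permutation $\lambda = (S_1, S_2)$ having exactly $j$ Callan pairs into four independent pieces of data: (i) the partition of the red elements $\{\piros{1}, \dots, \piros{n}\}$ into the $j$ ordinary red blocks together with the single extra red block; (ii) the analogous partition of the blue elements into $j$ ordinary blue blocks and the extra blue block; (iii) the matching of ordinary blue blocks to ordinary red blocks into Callan pairs; and (iv) the distribution of these $j$ pairs between $S_1$ and $S_2$ together with their internal linear orders. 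Since every block is written in decreasing order, each block is determined by its underlying set, and the positions of the extra blocks are determined by the canonical decomposition \eqref{S1S2-exp}, so these four pieces indeed reconstruct $\lambda$ uniquely.

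I would count (i) and (ii) with Stirling numbers of the second kind via the standard dummy-element trick: allowing the extra block to be empty, the choice in (i) is a partition of $\{\piros{1}, \dots, \piros{n}\}$ into $j$ nonempty blocks plus a possibly empty marked block, which is in bijection with partitions of an $(n+1)$-element set into $j+1$ blocks, giving $\sts{n+1}{j+1}$; symmetrically (ii) contributes $\sts{k+1}{j+1}$, and the matching in (iii) contributes $j!$. It remains to show that the weighted count of (iv), namely $\sum x^{\wclr(\lambda)}$ over all distributions and orderings of a fixed family of $j$ distinguishable pairs, equals $(x+1)^{\overline{j}}$; crucially this factor depends only on $j$, because $\wclr$ reads off only the relative order of the blue-block minima of the pairs placed in $S_1$, and as the arrangement varies this relative order ranges over all permutations.

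The heart of the argument is therefore a lemma on left-to-right minima. Summing over the $\ell!$ orderings of an $\ell$-element $S_1$-part, the number of orderings with exactly $i$ left-to-right minima is the unsigned Stirling number of the first kind $\st{\ell}{i}$ (Foata's fundamental bijection), whence
\[
	\sum_{\sigma} x^{\#\{\text{left-to-right minima of } \sigma\}} = \sum_{i} \st{\ell}{i} x^{i} = x^{\overline{\ell}} .
\]
Splitting the $j$ pairs into an ordered $S_1$-part of size $\ell$ and an unweighted ordered $S_2$-part of size $j - \ell$ then gives
\[
	\sum_{\ell=0}^{j} \binom{j}{\ell}\,(j-\ell)!\, x^{\overline{\ell}} = j! \sum_{\ell=0}^{j} \frac{x^{\overline{\ell}}}{\ell!} = j!\binom{x+j}{j} = (x+1)^{\overline{j}},
\]
where the middle equality is the hockey-stick (Vandermonde) summation $\sum_{\ell} \binom{x+\ell-1}{\ell} = \binom{x+j}{j}$. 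Multiplying the four factors yields \eqref{Cnk-explicit}.

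Finally I would deduce the generating function from the explicit formula. Differentiating the classical identity $\sum_{N \ge 0} \sts{N}{j+1} X^N/N! = (e^X - 1)^{j+1}/(j+1)!$ gives $\sum_{n \ge 0} \sts{n+1}{j+1} X^n/n! = e^X (e^X - 1)^j / j!$, and the symmetric statement in $Y$. Substituting these into \eqref{Cnk-explicit}, summing over $j$, and using the generalized binomial series $\sum_{j} \binom{x+j}{j} z^j = (1-z)^{-(x+1)}$ with $z = (e^X-1)(e^Y-1)$ collapses the double sum to $e^{X+Y}\big(1-(e^X-1)(e^Y-1)\big)^{-(x+1)}$, and the simplification $1 - (e^X-1)(e^Y-1) = e^X + e^Y - e^{X+Y}$ gives the stated generating function; comparing \eqref{Cnk-explicit} with \eqref{SPB-exp} then yields $\mathcal{C}_n^k(x) = \widehat{\scB}_n^k(x)$. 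The main obstacle is step (iv): one must verify carefully that the four pieces of data are genuinely independent and that the weight factors through only the relative order of the $S_1$ blue minima, so that the left-to-right-minimum count decouples into the clean rising-factorial identity above. The bookkeeping for the extra blocks and for the degenerate cases $\ell = 0$ or $m = 0$ is where errors are easiest to make.
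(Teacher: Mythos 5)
Your proposal is correct and takes essentially the same route as the paper: the theorem is quoted from B\'enyi--Matsusaka, and the justification the paper itself gives (\cref{Stirling2} together with the discussion in \cref{s2-2}, ``the explicit formula immediately follows \dots by simply enumerating the objects'') is precisely the direct enumeration you carry out, namely grouping by the number $j$ of Callan pairs, counting the red and blue partitions by $\sts{n+1}{j+1}\sts{k+1}{j+1}$ via the dummy-element trick, the matching by $j!$, and the weighted placements by $(x+1)^{\overline{j}}$ through the left-to-right-minimum/Stirling-first-kind identity. The derivation of the generating function and of $\mathcal{C}_n^k(x)=\widehat{\scB}_n^k(x)$ from \cref{Cnk-explicit} is the same formal computation as in the cited source.
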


\begin{remark}\label{Stirling2}
	For positive integers $n$ and $k$, the Stirling number of the second kind $\sts{n}{k}$ counts the number of ways to divide a set of $n$ elements into $k$ nonempty sets. The Stirling numbers satisfy the recurrence formula
	\[
		\sts{n+1}{k} = \sts{n}{k-1} + k \sts{n}{k}
	\]
	with the initial values $\sts{0}{0} = 1$ and $\sts{n}{0} = \sts{0}{k} = 0$ ($n, k \neq 0$). From this definition, the above explicit formula \cref{Cnk-explicit} immediately follows (see~\cite[Section 3]{BenyiMatsusaka2021}).
\end{remark}

\subsection{Alternative tableaux} \label{s2-3}

The second combinatorial model for $\widehat{\scB}_n^k(x)$ is given by alternative tableaux of rectangular shape. An alternative tableau of general shape was introduced by Viennot~\cite{Viennot2008} and studied by Nadeau~\cite{Nadeau2011}. Here, we recall its definition and the weight function $\wtst: \mathcal{T}_n^k \to \Z_{\geq 0}$ introduced in~\cite{BenyiMatsusaka2021}.

\begin{definition}\label{AltTab-def}
	Let $n, k$ be positive integers. An \emph{alternative tableau} of rectangular shape of size $n \times k$ is a rectangle with a partial filling of the cells with left arrows $\leftarrow$ and down arrows $\downarrow$, such that all cells pointed by an arrow are empty. We let $\mathcal{T}_n^k$ denote the set of all alternative tableaux with a rectangular shape and a size of $n \times k$. 
	
	For each $\lambda \in \mathcal{T}_n^k$,
\begin{enumerate}
	\item consider the first from the top consecutive rows that contain left arrows $\leftarrow$, and
	\item count the number of left arrows $\leftarrow$ such that all $\leftarrow$ in the upper rows are located further to the right.
\end{enumerate}
We let $\wtst(\lambda)$ denote the number of such left arrows, (the superscript ``$\mathrm{st}$" of which is an abbreviation of ``stair").
\end{definition}

\begin{example}\label{AltTab76}
	The following alternative tableau $\lambda \in \mathcal{T}_7^6$ has a weight $\wtst(\lambda) = 2$.
	
	\begin{figure}[H]
	\begin{tikzpicture}
		\draw (0,0)--(3,0); \draw (0,1/2)--(3,1/2); \draw (0,2/2)--(3,2/2); \draw (0,3/2)--(3,3/2); \draw (0,4/2)--(3,4/2); \draw (0,5/2)--(3,5/2); \draw (0,6/2)--(3,6/2); \draw (0,7/2)--(3,7/2);
		\draw (0,0)--(0,7/2); \draw (1/2,0)--(1/2,7/2); \draw (2/2,0)--(2/2,7/2); \draw (3/2,0)--(3/2,7/2); \draw (4/2,0)--(4/2,7/2); \draw (5/2,0)--(5/2,7/2); \draw (6/2,0)--(6/2,7/2);
		\draw (6/2-1/4,7/2-1/4) node{$\piros{\leftarrow}$}; \draw (6/2-1/4,6/2-1/4) node{$\leftarrow$}; \draw (4/2-1/4,5/2-1/4) node{$\piros{\leftarrow}$}; \draw (4/2-1/4,4/2-1/4) node{$\downarrow$}; \draw (3/2-1/4, 2/2-1/4) node{$\leftarrow$}; \draw (3/2-1/4, 1/2-1/4) node{$\downarrow$}; \draw (2/2-1/4, 4/2-1/4) node{$\downarrow$}; \draw (1/2-1/4, 3/2-1/4) node{$\leftarrow$}; \draw (1/2-1/4, 1/2-1/4) node{$\leftarrow$}; 
		
		\draw[line width=2pt] (6/2,7/2)--(5/2,7/2)--(5/2,5/2)--(3/2,5/2)--(3/2,4/2);
	\end{tikzpicture}
	\caption{An alternative tableau of size $7 \times 6$ with an indication of its weight.}
	\label{AltTab76}
	\end{figure}
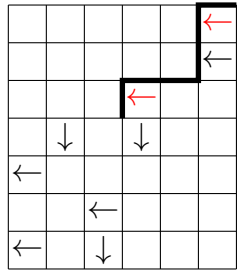
\end{example}

In a similar manner as above, we define the polynomial $\mathcal{T}_n^k(x)$ as
\[
	\mathcal{T}_n^k(x) = \mathcal{T}_n^k(x; \wtst) = \sum_{\lambda \in \mathcal{T}_n^k} x^{\wtst(\lambda)}.
\]

\begin{theorem}\cite{BenyiMatsusaka2021} \label{T=C}
	Let $\mathcal{T}_n^0(x) = \mathcal{T}_0^k(x) = 1$. For any integers $n,k \geq 0$, the polynomial $\mathcal{T}_n^k(x)$ coincides with $\mathcal{C}_n^k(x)$, i.e., $\mathcal{T}_n^k(x) = \widehat{\scB}_n^k(x)$.
\end{theorem}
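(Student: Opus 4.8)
The plan is to leverage \cref{Callan-explicit}, which already identifies $\mathcal{C}_n^k(x)$ with $\widehat{\scB}_n^k(x)$, so that the entire content of the statement reduces to the single equality $\mathcal{T}_n^k(x) = \mathcal{C}_n^k(x)$. The most natural attempt would be a direct weight-preserving bijection between the two models, but since the statistics $\wtst$ and $\wclr$ are defined in very different ways, the reliable route is to show that $\mathcal{T}_n^k(x)$ satisfies the same recurrence as $\widehat{\scB}_n^k(x)$ and argue by induction on $n$, with base case $n = 0$ supplied by the stipulated normalization $\mathcal{T}_0^k(x) = 1 = \mathcal{C}_0^k(x)$ (and symmetrically for $k = 0$).

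To obtain a recurrence on the tableau side, I would fix $\lambda \in \mathcal{T}_n^k$ and condition on the contents of its bottom row (row $n$), which contains at most one left arrow, with only empty cells to its left, and otherwise a mix of empty cells and down arrows. Deleting this row returns an alternative tableau of smaller height, and summing over the admissible contents of the removed row yields a weighted recursion expressing $\mathcal{T}_n^k(x)$ through tableau polynomials of height $n-1$. Because $\wtst$ reads off a staircase of left arrows in the \emph{top} rows, removing the bottom row leaves the staircase intact in nearly every branch, so the weight is preserved except in the configurations where the staircase descends all the way to row $n$; those branches must be separated out and shown to contribute the single factor of $x$.

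For the arithmetic side I would start from the explicit formula \cref{Cnk-explicit} and apply the Stirling recurrence of \cref{Stirling2} in the form $\sts{n+1}{j+1} = \sts{n}{j} + (j+1)\sts{n}{j+1}$ to the factor $\sts{n+1}{j+1}$. After re-summing and reindexing the two resulting sums, the rising factorial $(x+1)^{\overline{j}}$ and the coefficients $j!$ should reproduce exactly the $x$-weights and multiplicities produced by the tableau recursion, with the two summands of the Stirling identity mirroring the two qualitative types of bottom row. Matching the two recurrences and the base cases then closes the induction and gives $\mathcal{T}_n^k(x) = \mathcal{C}_n^k(x) = \widehat{\scB}_n^k(x)$.

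The main obstacle is tracking $\wtst$ through the recurrence, because it is a \emph{global} statistic: it depends on the first block of consecutive left-arrow rows and on the left-to-right comparison of arrow positions within that block, so deleting a row or column can shift arrow positions in the upper rows and silently alter which arrows are counted. Isolating precisely the branches in which the staircase is affected, and proving that their total contribution is the $x$-weighted term dictated by the Stirling recurrence, is where the real work lies. The more illuminating alternative — and the one that also settles the problem left open in \cite{BenyiMatsusaka2021} — is to construct a direct weight-preserving bijection $(\mathcal{C}_n^k, \wclr) \to (\mathcal{T}_n^k, \wtst)$; there the same difficulty reappears as the need to reconcile the left-to-right-minimum statistic of \cref{ordinary} with the staircase statistic of \cref{AltTab-def}.
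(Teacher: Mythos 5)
Your overall strategy coincides with the one the paper attributes to \cite{BenyiMatsusaka2021}: take $\mathcal{C}_n^k(x)=\widehat{\scB}_n^k(x)$ from \cref{Callan-explicit}, prove that $\mathcal{T}_n^k(x)$ satisfies the same recurrence, and induct. The gap is in the step where you generate the recurrence: conditioning on the bottom row and deleting it does not yield a recursion in the polynomials at all. The set of admissible bottom rows is not a function of the dimensions of the truncated tableau; it depends on its internal structure, because every column containing a $\downarrow$ somewhere above forces the bottom-row cell of that column to be empty. Already for $n=2$, $k=1$ this kills the proposal: the three tableaux in $\mathcal{T}_1^1$ (empty cell, $\leftarrow$, $\downarrow$) admit $3$, $3$, and $1$ admissible bottom-row extensions, respectively, so $|\mathcal{T}_2^1|=7$, which is not even divisible by $|\mathcal{T}_1^1|=3$; hence there is no identity of the form $\mathcal{T}_2^1(x)=c(x)\,\mathcal{T}_1^1(x)$ with $c(x)$ a polynomial (with nonnegative integer coefficients) enumerating weighted bottom-row types. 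A recursion ``through tableau polynomials of height $n-1$'' of the kind you describe therefore cannot exist; at best one could close a recursion for a refined polynomial that also tracks the number of $\downarrow$-free columns, which is a different and harder statement.

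The decomposition that does work --- the one behind \cref{recurrence}, used in \cite{BenyiMatsusaka2021} and visible in \cref{s2-2} of this paper, where the bijection peels the columns $\lambda_k,\lambda_{k-1},\dots$ off the right edge --- removes the \emph{rightmost column} together with the rows containing its left arrows. This is the correct move because a $\leftarrow$ in the rightmost column points at every other cell of its row, forcing that entire row to be empty; deleting the column and those $\ell$ rows is then a genuine bijection onto pairs consisting of a column configuration and an arbitrary tableau in $\mathcal{T}_{n-\ell}^{k-1}$, so both indices drop exactly as in \cref{recurrence}. A $\downarrow$ in the bottom row forces nothing, which is precisely why your sum fails to factor; you chose the bottom row to protect the top-anchored statistic $\wtst$, but the weight bookkeeping is instead handled in the column decomposition by noting that the removed column's left arrows are exactly the candidates for extending the staircase, producing the factor $x$. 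Note also that with the correct decomposition your Stirling-recurrence computation is unnecessary: \cref{recurrence} for $\widehat{\scB}_n^k(x)$ is already available, so the induction closes as soon as the tableau side is established.
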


This theorem was proven by showing that both polynomials $\mathcal{C}_n^k(x)$ and $\mathcal{T}_n^k(x)$ satisfy the same recursion
\begin{align}\label{recurrence}
	\widehat{\scB}_n^k(x) = (n+1) \widehat{\scB}_n^{k-1}(x) + x \sum_{j=0}^{n-1} {n \choose j} \widehat{\scB}_{j}^{k-1}(x) + \sum_{j=1}^{n-1} {n \choose j-1} \widehat{\scB}_{j}^{k-1}(x).
\end{align}

\subsection{A combinatorial bijection} \label{s2-2}

In this subsection, we construct a bijection between these two models.

Both combinatorial models have their own advantages and disadvantages. On one hand, although it is quite easy to show that the polynomial $\mathcal{T}_n^k(x)$ satisfies the recursion in \cref{recurrence}, it is difficult to check the explicit formula (\cref{Callan-explicit}) for $\mathcal{T}_n^k(x)$. On the other hand, as we mentioned in \cref{Stirling2}, the explicit formula immediately follows from the definition of $\mathcal{C}_n^k(x)$ by simply enumerating the objects. However, it is slightly complicated to show that the Callan polynomials $\mathcal{C}_n^k(x)$ satisfy the recursion. 

Indeed, the authors~\cite[Theorem 14]{BenyiMatsusaka2021} used the following auxiliary map $\varphi$ to show the recursion for the Callan polynomials. Here, we recall this map.

\begin{definition}\label{def-auxiliary}
	For any integers $n \geq 0$ and $k > 0$, we define a map $\varphi: \mathcal{C}_n^k \to \mathcal{C}_{\leq n}^{k-1} := \bigcup_{i=0}^n \mathcal{C}_i^{k-1}$ as follows. Let $(S_1, S_2) \in \mathcal{C}_n^k$ be expressed as in \cref{S1S2-exp}.
	\begin{itemize}
		\item[(1)] If $\kek{k}$ is in the extra block $B_{\ell+1}$, then remove $\kek{k}$.
		\item[(2)] If $\kek{k}$ is alone in the first ordinary block $B_1$, then remove the first Callan pair $B_1 R_1$.
		\item[$(3)$] Otherwise, let $R$ be the ordinary red block that forms a Callan pair with the blue block containing $\kek{k}$. Then, remove $\kek{k}$ and replace $R$ with the red element $\piros{0}$. Finally, rearrange the position of $\piros{0}$ in decreasing order if needed.
	\end{itemize}
	After that, we rearrange red elements from $1$.
\end{definition}

\begin{example}\label{varphi-ex}
	For a double Callan permutation $(S_1, S_2) = (\kek{6}\piros{21} \kek{4}\piros{4} \kek{5}, \piros{76}\kek{32} \piros{53}\kek{1}) \in \mathcal{C}_7^6$, we have
	\begin{align*}
		\varphi &: (\kek{6}\piros{21} \kek{4}\piros{4} \kek{5}, \piros{76}\kek{32} \piros{53}\kek{1}) \mapsto (\kek{4}\piros{4} \kek{5}, \piros{76}\kek{32} \piros{53}\kek{1}) \mapsto (\kek{4}\piros{2} \kek{5}, \piros{54}\kek{32} \piros{31}\kek{1}), & &\cdots (2)\\
		\varphi &: (\kek{4}\piros{2} \kek{5}, \piros{54}\kek{32} \piros{31}\kek{1}) \mapsto (\kek{4}\piros{2}, \piros{54}\kek{32} \piros{31}\kek{1}), & &\cdots (1)\\
		\varphi &: (\kek{4}\piros{2}, \piros{54}\kek{32} \piros{31}\kek{1}) \mapsto (\emptyset, \piros{54}\kek{32} \piros{31}\kek{1}) \mapsto (\emptyset,\piros{43}\kek{32} \piros{21}\kek{1}), & &\cdots (2)\\
		\varphi &: (\emptyset, \piros{43}\kek{32} \piros{21}\kek{1}) \mapsto (\emptyset, \piros{0}\kek{2} \piros{21}\kek{1}) \mapsto (\emptyset, \piros{1}\kek{2} \piros{32}\kek{1}), & &\cdots (3)\\
		\varphi &: (\emptyset, \piros{1}\kek{2} \piros{32}\kek{1}) \mapsto (\emptyset, \piros{0}\piros{32}\kek{1}) \mapsto (\emptyset, \piros{320}\kek{1}) \mapsto (\emptyset, \piros{321}\kek{1}), & &\cdots (3)\\
		\varphi &: (\emptyset, \piros{321}\kek{1}) \mapsto (\emptyset, \piros{0}) \mapsto (\emptyset, \piros{1}). & &\cdots (3)
	\end{align*}
\end{example}

As mentioned above, the polynomials $\mathcal{C}_n^k(x)$ and $\mathcal{T}_n^k(x)$ satisfy the same recursion \cref{recurrence}. This tells us that two models $(\mathcal{C}_n^k, \wclr)$ and $(\mathcal{T}_n^k, \wtl)$ have the same recursive structure. Using the map $\varphi$, we construct a bijection from $\mathcal{C}_n^k$ to $\mathcal{T}_n^k$ in a stepwise manner on $k$. We first define the map from $\mathcal{C}_n^k$ to $\mathcal{T}_n^1$.

\begin{definition}\label{BtoT}
	For a given double Callan permutation $\lambda \in \mathcal{C}_n^k$, we create an alternative tableau $\lambda_k \in \mathcal{T}_n^1$ by following the steps below. If $\kek{k}$ is not in the extra block, let $R$ be as described in \cref{def-auxiliary} (3).
	\begin{enumerate}
		\item If $\kek{k}$ is in the extra block, then $\lambda_k = \emptyset$.
		\item If $\kek{k}$ is alone in the first ordinary block $B_1$, then the (1,1)-entry is $\leftarrow$. Moreover, 
			\begin{enumerate}
				\item if $\piros{1} \in R$, then the $(\ell, 1)$-entry is $\leftarrow$ for $\ell \in R$, and
				\item if $\piros{1} \not\in R$, then the $(m, 1)$-entry is $\downarrow$ for $m = \max R$, and the $(\ell, 1)$-entry is $\leftarrow$ for $\ell \in R \setminus \{m\}$.
			\end{enumerate}
		\item Otherwise,
		\begin{enumerate}
			\item if $|R| = 1$, then the $(\ell,1)$-entry is $\downarrow$ for $\ell \in R$,
			\item if $|R| > 1$ and $\piros{1} \in R$, then the $(\ell, 1)$-entry is $\leftarrow$ for $\ell \in R \setminus \{1\}$, and
			\item if $|R| > 1$ and $\piros{1} \not\in R$, then the $(m,1)$-entry is $\downarrow$ for $m = \max R$, and $(\ell,1)$-entry is $\leftarrow$ for $\ell \in R \setminus \{m\}$.
		\end{enumerate}
	\end{enumerate}
\end{definition}

We next define the desired map $\mathcal{C}_n^k \to \mathcal{T}_n^k$ inductively. By \cref{BtoT}, we have $\lambda_k \in \mathcal{T}_n^1$. If $\lambda_k$ contains $\ell$ left arrows, then $\varphi(\lambda) \in \mathcal{C}_{n-\ell}^{k-1}$. By applying the map in \cref{BtoT} again to $\varphi(\lambda) \in \mathcal{C}_{n-\ell}^{k-1}$, we obtain $\lambda_{k-1} \in \mathcal{T}_{n - \ell}^1$. By repeating the steps, we have a sequence $\lambda_1 \lambda_2 \cdots \lambda_k$. The concatenation gives an alternative tableau in $\mathcal{T}_n^k$ with the same weight as $\wclr(\lambda)$. 

The bijectiveness can be checked inductively. We sketch the idea of the proof. For any $n$ and $k = 1$, the map $\mathcal{C}_n^1 \to \mathcal{T}_n^1$ defined in \cref{BtoT} is a bijection preserving the weight. We let $\mathcal{T}_{n,\ell}^1$ denote the subset of $\mathcal{T}_n^1$ such that $\lambda \in \mathcal{T}_n^1$ contains $\ell$ left arrows. Then, our maps induce a bijection $\mathcal{C}_n^k \to \bigcup_{\ell=0}^n (\mathcal{C}_{n-\ell}^{k-1} \times \mathcal{T}_{n,\ell}^1) : \lambda \mapsto (\varphi(\lambda), \lambda_k)$. By the inductive assumption, $\mathcal{C}_{n-\ell}^{k-1}$ is isomorphic to $\mathcal{T}_{n-\ell}^{k-1}$. Thus, $\mathcal{C}_n^k \to \mathcal{T}_n^k$ is bijective. Note that Case (2) in \cref{def-auxiliary} and Case (2) in \cref{BtoT} affect the weight. In particular, we can check that the bijective map preserves the weight.

In conclusion, we have the following:

\begin{theorem}\label{direct-bijection-CT}
	The above map gives a bijection $(\mathcal{C}_n^k, \wclr) \to (\mathcal{T}_n^k, \wtst)$.
\end{theorem}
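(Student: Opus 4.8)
The plan is to argue by induction on $k$ (with $n$ arbitrary), proving simultaneously that the map is bijective and that it preserves weights. For $k=0$ both sides are singletons, and for $k=1$ only the unique blue letter $\kek{1}$ can occur, so only Cases (1) and (2) of \cref{def-auxiliary} and \cref{BtoT} arise. First I would check directly that the arrow placements of \cref{BtoT} always respect the defining rules of an alternative tableau---no filled cell lies to the left of a left arrow or below a down arrow---so that $\lambda_1$ genuinely lies in $\mathcal{T}_n^1$; a short case analysis then shows that $\lambda \mapsto \lambda_1$ is a bijection $\mathcal{C}_n^1 \to \mathcal{T}_n^1$ with $\wtst(\lambda_1) = \wclr(\lambda)$.

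For the inductive step I would assume the result for $k-1$ and all $n$, and establish that
\[
	\mathcal{C}_n^k \longrightarrow \bigsqcup_{\ell=0}^n \bigl( \mathcal{C}_{n-\ell}^{k-1} \times \mathcal{T}_{n,\ell}^1 \bigr), \qquad \lambda \longmapsto (\varphi(\lambda), \lambda_k),
\]
is a bijection. Well-definedness reduces to matching the three cases of \cref{BtoT} with those of \cref{def-auxiliary}: in each case the left arrows of $\lambda_k$ are in bijection with the red letters that $\varphi$ deletes, so if $\lambda_k$ carries $\ell$ left arrows then $\varphi(\lambda) \in \mathcal{C}_{n-\ell}^{k-1}$ (the auxiliary down arrows appearing in Cases (2) and (3) encode structural data but are not among the counted deletions). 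For injectivity and surjectivity I would exhibit the inverse explicitly: the arrow pattern of $\lambda_k$ records which case produced it, and hence how to reinsert $\kek{k}$, restore the deleted Callan pair, and recover the original red block $R$ from the inserted letter $\piros{0}$. Thus $(\varphi(\lambda), \lambda_k)$ determines $\lambda$ uniquely.

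Combining this factorization with the inductive hypothesis $\mathcal{C}_{n-\ell}^{k-1} \cong \mathcal{T}_{n-\ell}^{k-1}$ yields the bijection $\mathcal{C}_n^k \to \mathcal{T}_n^k$, provided the concatenation fills the rectangle bijectively. The key geometric observation is that the $\ell$ rows carrying a left arrow in column $k$ have all of their cells in columns $1, \dots, k-1$ forced to be empty by the left-arrow rule; hence the image of $\mathcal{T}_{n-\ell}^{k-1}$ occupies exactly the complementary $n-\ell$ rows, and the last column of any tableau in $\mathcal{T}_n^k$ reads off the factor $\lambda_k$ uniquely. This makes the concatenation a bijection onto $\mathcal{T}_n^k$.

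The remaining and, I expect, hardest point is weight preservation, because $\wtst$ is a global statistic read off the topmost block of consecutive rows containing left arrows, whereas the construction proceeds one column at a time. The strategy is to show that only Case (2) changes anything, and that it changes both weights by exactly one. Since $\kek{k}$ is the maximal blue letter, its deletion in Cases (1) and (3) never alters the number of left-to-right minima of the blue-block-minimum sequence $\pi$, so $\wclr(\varphi(\lambda)) = \wclr(\lambda)$ there; in Case (2) the deleted block $B_1 = \kek{k}$ is the first, hence record-setting, term of $\pi$, and its maximality forces the count to drop by exactly one. On the tableau side one must match this with the staircase: the left arrow that \cref{BtoT}\,(2) places in the top row of column $k$ becomes the initial (topmost, rightmost) step of the staircase and contributes $+1$, while in Cases (1) and (3) the arrows deposited in column $k$ must be shown never to create a new topmost step. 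Verifying this last point---that a column-$k$ left arrow, lying in the rightmost column, is never a left-to-right minimum of the column positions unless it is the very first step---is the delicate part, and it is precisely what forces the careful placement rules of \cref{BtoT}. Granting it, the two weights obey the same one-step recursion, and the inductive hypothesis closes the argument, giving $\wtst = \wclr$ and hence the theorem.
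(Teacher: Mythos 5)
Your proposal follows the paper's own proof essentially step for step: the same induction on $k$, the same factorization $\lambda \mapsto (\varphi(\lambda), \lambda_k)$ onto $\bigcup_{\ell=0}^n \bigl(\mathcal{C}_{n-\ell}^{k-1} \times \mathcal{T}_{n,\ell}^1\bigr)$ (down to the notation $\mathcal{T}_{n,\ell}^1$), the same concatenation argument, and the same observation that only Case (2) of \cref{def-auxiliary} and \cref{BtoT} affects the two weights; your treatment of the weight recursion is in fact more detailed than the paper's one-line assertion that it ``can be checked.''

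There is, however, one concrete error: your base case asserts that for $k=1$ only Cases (1) and (2) arise. This is false. Case (3) occurs whenever the unique blue letter $\kek{1}$ sits in $S_2$, for instance $(\emptyset, \piros{1}\kek{1}) \in \mathcal{C}_1^1$ or $(\emptyset, \piros{21}\kek{1}) \in \mathcal{C}_2^1$: the case distinction in \cref{def-auxiliary} concerns \emph{where} $\kek{k}$ sits, not how many blue letters exist. These Case (3) elements are precisely the ones mapped to columns having no left arrow in the top row (a lone down arrow, or left arrows only in rows $\geq 2$), and they carry $\wclr = 0$ because $S_1 = \emptyset$ contributes no blue blocks, matching $\wtst = 0$ for such columns. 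If your $k=1$ analysis genuinely covered only Cases (1) and (2), the map $\mathcal{C}_n^1 \to \mathcal{T}_n^1$ would not be surjective and the induction could not start. The fix is routine --- include Case (3) in the base-case enumeration, exactly as your inductive step already does --- after which your argument is complete and coincides with the paper's.
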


\begin{example}\label{ExaExa}
	For a double Callan permutation $\lambda = (\kek{6}\piros{21} \kek{4}\piros{4} \kek{5}, \piros{76}\kek{32} \piros{53}\kek{1}) \in \mathcal{C}_7^6$, we already computed the images under $\varphi$ in \cref{varphi-ex}. The corresponding sequence $\lambda_1 \lambda_2 \cdots \lambda_6$ of the alternative tableaux and their concatenation are as follows. This is the alternative tableau given in \cref{AltTab76}.

\begin{figure}[H]
	
	\begin{tikzpicture}
		\draw (0,2)--(1/2,2)--(1/2,7/2)--(0,7/2)--(0,2); \draw (0,5/2)--(1/2,5/2); \draw (0,3)--(1/2,3); \draw (1/4,2+1/4) node{$\leftarrow$}; \draw (1/4,5/2+1/4) node{$\leftarrow$}; \draw (1/4, -1/4) node {$\lambda_1$}; \draw (1/4, -3/4) node {$\vdots$}; \draw (1/4, -3/2) node {$(3)_b$};
		\draw (1,2)--(3/2,2)--(3/2,7/2)--(1,7/2)--(1,2); \draw (1,5/2)--(3/2,5/2); \draw (1,3)--(3/2,3); \draw (1+1/4,3+1/4) node {$\downarrow$}; \draw (1+1/4, -1/4) node {$\lambda_2$}; \draw (1+1/4, -3/4) node {$\vdots$}; \draw (1+1/4, -3/2) node {$(3)_a$};
		\draw (2,3/2)--(5/2,3/2)--(5/2,7/2)--(2,7/2)--(2,3/2); \draw (2,2)--(5/2,2); \draw (2,5/2)--(5/2,5/2); \draw (2,3)--(5/2,3); \draw (2+1/4,3/2+1/4) node{$\downarrow$}; \draw (2+1/4,2+1/4) node{$\leftarrow$}; \draw (2+1/4, -1/4) node {$\lambda_3$}; \draw (2+1/4, -3/4) node {$\vdots$}; \draw (2+1/4, -3/2) node {$(3)_c$};
		\draw (3,1)--(7/2,1)--(7/2,7/2)--(3,7/2)--(3,1); \draw (3,3/2)--(7/2,3/2); \draw (3,2)--(7/2,2); \draw (3,5/2)--(7/2,5/2); \draw (3,3)--(7/2,3); \draw (3+1/4, 5/2+1/4) node {$\downarrow$}; \draw (3+1/4, 3+1/4) node {$\leftarrow$}; \draw (3+1/4, -1/4) node {$\lambda_4$}; \draw (3+1/4, -3/4) node {$\vdots$}; \draw (3+1/4, -3/2) node {$(2)_b$};
		\draw (4,1)--(9/2,1)--(9/2,7/2)--(4,7/2)--(4,1); \draw (4,3/2)--(9/2,3/2); \draw (4,2)--(9/2,2); \draw (4,5/2)--(9/2,5/2); \draw (4,3)--(9/2,3); \draw (4+1/4, -1/4) node {$\lambda_5$}; \draw (4+1/4, -3/4) node {$\vdots$}; \draw (4+1/4, -3/2) node {$(1)$};
		\draw (5,0)--(11/2,0)--(11/2,7/2)--(5,7/2)--(5,0); \draw (5,1/2)--(11/2,1/2); \draw (5,1)--(11/2,1); \draw (5,3/2)--(11/2,3/2); \draw (5,2)--(11/2,2); \draw (5,5/2)--(11/2,5/2); \draw (5,3)--(11/2,3); \draw (5+1/4, 5/2+1/4) node {$\leftarrow$}; \draw (5+1/4, 3+1/4) node {$\leftarrow$}; \draw (5+1/4, -1/4) node {$\lambda_6$}; \draw (5+1/4, -3/4) node {$\vdots$}; \draw (5+1/4, -3/2) node {$(2)_a$};
		
		\draw (7,0)--(10,0); \draw (7,1/2)--(10,1/2); \draw (7,2/2)--(10,2/2); \draw (7,3/2)--(10,3/2); \draw (7,4/2)--(10,4/2); \draw (7,5/2)--(10,5/2); \draw (7,6/2)--(10,6/2); \draw (7,7/2)--(10,7/2);
		\draw (7,0)--(7,7/2); \draw (15/2,0)--(15/2,7/2); \draw (16/2,0)--(16/2,7/2); \draw (17/2,0)--(17/2,7/2); \draw (18/2,0)--(18/2,7/2); \draw (19/2,0)--(19/2,7/2); \draw (20/2,0)--(20/2,7/2);
		\draw (20/2-1/4,7/2-1/4) node{$\leftarrow$}; \draw (20/2-1/4,6/2-1/4) node{$\leftarrow$}; \draw (18/2-1/4,5/2-1/4) node{$\leftarrow$}; \draw (18/2-1/4,4/2-1/4) node{$\downarrow$}; \draw (17/2-1/4, 2/2-1/4) node{$\leftarrow$}; \draw (17/2-1/4, 1/2-1/4) node{$\downarrow$}; \draw (16/2-1/4, 4/2-1/4) node{$\downarrow$}; \draw (15/2-1/4, 3/2-1/4) node{$\leftarrow$}; \draw (15/2-1/4, 1/2-1/4) node{$\leftarrow$}; 
		\draw [dashed] (7,3+1/4)--(19/2,3+1/4); \draw [dashed] (7,5/2+1/4)--(19/2,5/2+1/4); \draw [dashed] (7,2+1/4)--(17/2,2+1/4); \draw [dashed] (7,1/2+1/4)--(8,1/2+1/4);
		
		\draw(6+1/4,3/2)node{$\rightarrow$};
	\end{tikzpicture}
	\caption{The sequence $\lambda_1 \cdots \lambda_6$ and their concatenation.}
\end{figure}
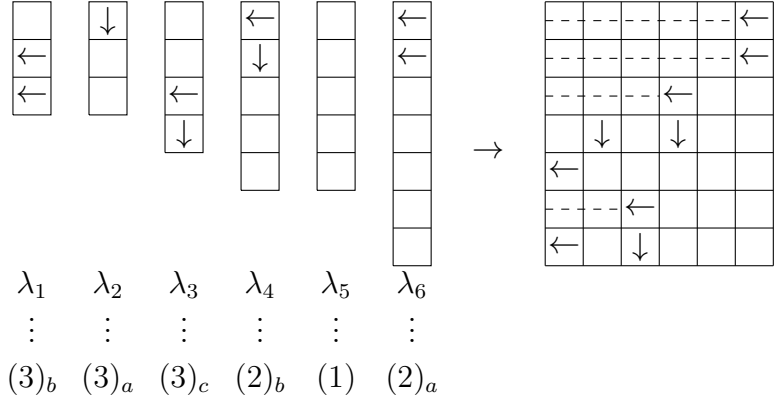
\end{example}

\section{A sequence of bijections}\label{s3}

In the previous section, we studied two combinatorial polynomials $\mathcal{C}_n^k(x)$ and $\mathcal{T}_n^k(x)$, both of which have definitions of the form
\[
	\mathcal{P}(x) = \mathcal{P}(x; w) = \sum_{\lambda \in \mathcal{\mathcal{P}}} x^{w(\lambda)}
\]
for a pair $(\mathcal{P}, w)$, where $\mathcal{P}$ is a set of combinatorial objects and $w: \mathcal{P} \to \Z_{\geq 0}$ is a suitable weight function. In this section, we introduce two additional combinatorial polynomials, $\widetilde{\mathcal{T}}_n^k(x)$ and $\mathscr{T}_n^k(x)$, and show that all polynomials coincide. In particular, we construct a sequence of bijections $\mathcal{T}_n^k \to \widetilde{\mathcal{T}}_n^k \to \mathscr{T}_n^k \to \mathcal{C}_n^k$ preserving the weight.

\subsection{Packed alternative tableaux}\label{s3-1}

Let $\mathcal{T}_n^k$ be the set of alternative tableaux of rectangular shape of size $n \times k$ as in \cref{AltTab-def}. We consider another weight $\wtl: \mathcal{T}_n^k \to \Z_{\geq 0}$ defined by the number of columns that contain $\leftarrow$ but does not contain $\downarrow$.

\begin{theorem}
	The polynomial $\mathcal{T}_n^k(x; \wtl) = \sum_{\lambda \in \mathcal{T}_n^k} x^{\wtl(\lambda)}$ coincides with $\widehat{\scB}_n^k(x)$.
\end{theorem}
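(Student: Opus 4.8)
The plan is to prove the identity by showing that the pair $(\mathcal{T}_n^k, \wtl)$ satisfies exactly the same recursion \cref{recurrence} and the same boundary conditions as $\widehat{\scB}_n^k(x)$, mirroring the treatment of $(\mathcal{T}_n^k, \wtst)$ in \cref{T=C}. Since \cref{recurrence} together with the boundary values $\widehat{\scB}_n^0(x) = \widehat{\scB}_0^k(x) = 1$ determines the whole family uniquely by induction on $k$, this suffices. The boundary values are immediate here: $\mathcal{T}_n^0$ and $\mathcal{T}_0^k$ each consist of a single tableau, which has weight $0$.

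First I would record the elementary structural fact that in any alternative tableau every column carries at most one $\downarrow$, and that this $\downarrow$ (when present) lies beneath all the $\leftarrow$'s of its column; this follows at once from the rule that a $\downarrow$ forces every cell below it to be empty. I would then peel off the last column (column $k$) of $\lambda \in \mathcal{T}_n^k$ and classify $\lambda$ by the set $L$ of rows carrying a $\leftarrow$ in that column together with the presence or absence of a $\downarrow$ there. The crucial observation is that each row of $L$ is empty in columns $1, \dots, k-1$, being pointed at by its own $\leftarrow$, so deleting column $k$ and the $|L|$ rows of $L$ produces a valid alternative tableau $\mu \in \mathcal{T}_{n-|L|}^{\,k-1}$; conversely $\lambda$ is recovered from $\mu$ together with the data of where the deleted rows and the optional $\downarrow$ are placed. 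Because the deleted rows are empty to the left, this deletion changes neither the $\leftarrow/\downarrow$ content of the surviving columns nor their contribution to $\wtl$.

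With $j = n - |L|$, the three cases then assemble into \cref{recurrence}. When $L = \emptyset$ we keep all $n$ rows, and column $k$ may be empty or carry a single $\downarrow$ in any of the $n$ rows, giving the $n+1$ weight-preserving extensions responsible for $(n+1)\widehat{\scB}_n^{k-1}(x)$. When $L \neq \emptyset$ and there is no $\downarrow$, column $k$ contributes $1$ to $\wtl$, accounting for the factor $x$, and the $|L| = n-j$ deleted rows can be interleaved among the $n$ positions in $\binom{n}{j}$ ways, yielding $x\binom{n}{j}\widehat{\scB}_j^{k-1}(x)$. When $L \neq \emptyset$ and a $\downarrow$ is present, column $k$ no longer counts toward $\wtl$, and since the $\downarrow$ must sit below every $\leftarrow$, summing over the row position $p$ of the $\downarrow$ the number of placements is $\sum_p \binom{p-1}{|L|} = \binom{n}{|L|+1} = \binom{n}{j-1}$ by the hockey-stick identity, giving the final term.

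The main obstacle I anticipate is the bookkeeping in the second step: verifying that deleting the $\leftarrow$-rows leaves a genuine alternative tableau, with no arrow pointing at a filled cell, and, more delicately, that the reconstruction from $(\mu, \text{placement data})$ is an honest bijection, so that the counts $\binom{n}{j}$ and $\binom{n}{j-1}$ are exact rather than mere upper bounds. Once the deletion and insertion are pinned down as mutually inverse and shown to respect $\wtl$, the case analysis and the hockey-stick simplification are routine. Alternatively, one could deduce the statement from the weight-preserving chain $\mathcal{T}_n^k \to \widetilde{\mathcal{T}}_n^k \to \mathscr{T}_n^k \to \mathcal{C}_n^k$ built in the remainder of this section, but the recursion above gives a shorter self-contained route.
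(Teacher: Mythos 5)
Your proposal is correct and takes essentially the same approach as the paper: the paper's entire proof is the one-line remark that one ``can check'' that $\mathcal{T}_n^k(x; \wtl)$ satisfies the recursion in \cref{recurrence}, and your last-column peeling argument --- with the three cases producing the terms $(n+1)\widehat{\scB}_n^{k-1}(x)$, $x\sum_{j=0}^{n-1}\binom{n}{j}\widehat{\scB}_j^{k-1}(x)$, and $\sum_{j=1}^{n-1}\binom{n}{j-1}\widehat{\scB}_j^{k-1}(x)$ exactly --- is a correct and complete way of carrying out that check. You in fact supply the details (the emptiness of the $\leftarrow$-rows, the bijectivity of deletion/insertion, and the hockey-stick count $\binom{n}{j-1}$ for the placements with a $\downarrow$) that the paper omits.
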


\begin{proof}
	We can check that the polynomials $\mathcal{T}_n^k(x; \wtl)$ satisfy the recursion in \cref{recurrence}.
\end{proof}

The packed alternative tableaux introduced by Nadeau~\cite{Nadeau2011} complement alternative tableaux by adding lacking arrows.   

\begin{definition}
	A \emph{packed alternative tableau} of rectangular shape of size $n \times k$ is a rectangle of size $(n+1) \times (k+1)$ with a partial filling of the cells with left arrows and down arrows, such that 
	\begin{enumerate}
		\item all cells pointed by an arrow are empty, 
		\item each row (resp. column) except for the bottom row (resp. the leftmost column) contains exactly one left arrow $\leftarrow$ (resp. exactly one down arrow $\downarrow$), and
		\item the bottom row (resp. the leftmost column) does not contain $\leftarrow$ (resp. $\downarrow$).
	\end{enumerate}
	We let $\widetilde{\mathcal{T}}_n^k$ denote the set of all packed alternative tableaux of rectangular shape of size $n \times k$. For each $\lambda \in \widetilde{\mathcal{T}}_n^k$, the weight $\wttl(\lambda)$ counts the number of columns that contain $\leftarrow$ and $\downarrow$ in the bottom row.
\end{definition}

By cutting out the bottom row and the leftmost column of a packed alternative tableau of size $n \times k$, we obtain an alternative tableau of size $n \times k$. It is clear that the operation defines a bijection $(\widetilde{\mathcal{T}}_n^k, \wttl) \to (\mathcal{T}_n^k, \wtl)$. Thus, the polynomial
\begin{align}\label{packed-alt-polynomial}
	\widetilde{\mathcal{T}}_n^k(x) = \widetilde{\mathcal{T}}_n^k(x; \wttl) = \sum_{\lambda \in \widetilde{\mathcal{T}}_n^k} x^{\wttl(\lambda)}
\end{align}
coincides with $\mathcal{T}_n^k(x; \wtl)$, i.e., $\widetilde{\mathcal{T}}_n^k(x) = \widehat{\scB}_n^k(x)$.

\begin{example}
	The $\lambda \in \mathcal{T}_7^6$ given in \cref{AltTab76} corresponds to the following packed alternative tableau and has a weight of $\wttl(\lambda)=2$.
	\begin{figure}[H]
	\begin{tikzpicture}
		\draw (-1/2,-1/2)--(3,-1/2); \draw (-1/2,0)--(0,0); \draw [double](0,0)--(3,0); \draw (-1/2,1/2)--(3,1/2); \draw (-1/2,2/2)--(3,2/2); \draw (-1/2,3/2)--(3,3/2); \draw (-1/2,4/2)--(3,4/2); \draw (-1/2,5/2)--(3,5/2); \draw (-1/2,6/2)--(3,6/2); \draw (-1/2,7/2)--(3,7/2);
		\draw (-1/2,-1/2)--(-1/2,7/2); \draw (0,-1/2)--(0,0); \draw [double](0,0)--(0,7/2); \draw (1/2,-1/2)--(1/2,7/2); \draw (2/2,-1/2)--(2/2,7/2); \draw (3/2,-1/2)--(3/2,7/2); \draw (4/2,-1/2)--(4/2,7/2); \draw (5/2,-1/2)--(5/2,7/2); \draw (6/2,-1/2)--(6/2,7/2); 
		
		\draw (6/2-1/4,7/2-1/4) node{$\leftarrow$}; \draw (6/2-1/4,6/2-1/4) node{$\leftarrow$}; \draw (4/2-1/4,5/2-1/4) node{$\leftarrow$}; \draw (4/2-1/4,4/2-1/4) node{$\downarrow$}; \draw (3/2-1/4, 2/2-1/4) node{$\leftarrow$}; \draw (3/2-1/4, 1/2-1/4) node{$\downarrow$}; \draw (2/2-1/4, 4/2-1/4) node{$\downarrow$}; \draw (1/2-1/4, 3/2-1/4) node{$\leftarrow$}; \draw (1/2-1/4, 1/2-1/4) node{$\leftarrow$};
		
		\draw (-1/4, 3/2+1/4) node{$\leftarrow$}; \draw (1/4,-1/4) node{$\downarrow$}; \draw (2+1/4, -1/4) node{$\downarrow$}; \draw (5/2+1/4, -1/4) node{$\downarrow$};
	\end{tikzpicture}
	\caption{Packed alternative tableau of size $7 \times 6$.}
	\label{pAltTab76}
	\end{figure}
\end{example}

\subsection{Double alternative trees}\label{s3-2}

Alternative trees and forests were studied by Nadeau~\cite{Nadeau2011}. Based on this idea, we consider a pair of alternative trees and introduce a suitable weight to the trees.

\begin{definition}
	A \emph{double alternative tree} of size $n \times k$ is a pair of labeled rooted trees $(T_1, T_2)$, such that
	\begin{enumerate}
		\item the vertex set satisfies $V(T_1) \sqcup V(T_2) = \{\piros{0}, \piros{1}, \dots, \piros{n}, \kek{0}, \kek{1}, \dots, \kek{k}\}$,
		\item the roots of the trees $T_1$ and $T_2$ are $\piros{0}$ and $\kek{0}$, respectively,
		\item all children of each red (resp. blue) vertex are blue (resp. red), and
		\item for each vertex, its descendants have a different color or are larger than the vertex.
	\end{enumerate}
	We let $\mathscr{T}_n^k$ denote the set of all double alternative trees of size $n \times k$.
\end{definition}

\begin{example}
	This is an example of double alternative trees.
	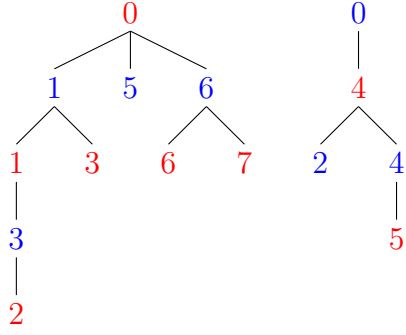
\begin{figure}[H]
	\begin{tikzpicture}
		\draw (3/2,4) node{$\piros{0}$}; \draw (9/2, 4) node{$\kek{0}$}; 
		
		\draw (3/2,4-1/4)--(1/2,3+1/4); \draw(3/2,4-1/4)--(3/2,3+1/4); \draw(3/2,4-1/4)--(5/2,3+1/4); \draw(9/2,4-1/4)--(9/2,3+1/4);
		
		\draw (1/2,3) node{$\kek{1}$}; \draw (3/2,3) node{$\kek{5}$}; \draw (5/2,3) node{$\kek{6}$}; \draw (9/2,3) node{$\piros{4}$}; 
		
		\draw (1/2,3-1/4)--(0,2+1/4); \draw (1/2,3-1/4)--(1,2+1/4); \draw (5/2,3-1/4)--(2,2+1/4); \draw (5/2,3-1/4)--(3,2+1/4); \draw (9/2,3-1/4)--(4,2+1/4); \draw (9/2,3-1/4)--(5,2+1/4); 
		
		\draw (0,2) node{$\piros{1}$}; \draw (1,2) node{$\piros{3}$}; \draw (2,2) node{$\piros{6}$}; \draw (3,2) node{$\piros{7}$}; \draw (4,2) node{$\kek{2}$}; \draw (5,2) node{$\kek{4}$}; 
		
		\draw (0,2-1/4)--(0,1+1/4); \draw (5,2-1/4)--(5,1+1/4);
		
		\draw (0,1) node{$\kek{3}$}; \draw (5,1) node{$\piros{5}$};
		
		\draw (0,1/4)--(0,1-1/4); \draw (0,0) node{$\piros{2}$};
	\end{tikzpicture}
	\caption{Double alternative tree of size $7 \times 6$.}
	\label{douAltTree76}
	\end{figure}
\end{example}

For each double alternative tree $\lambda \in \mathscr{T}_n^k$, we define its weight $\wtc(\lambda)$ by the number of non-leaf (blue) children of $\piros{0}$. Here, a vertex is called a \emph{leaf} if it does not have any child. For instance, the weight of the above $\lambda \in \mathscr{T}_7^6$ is $\wtc(\lambda) = \#\{\kek{1},\kek{6}\} = 2$. 

\begin{theorem}\label{double-alt-tree-poly}
	The polynomial $\mathscr{T}_n^k(x) = \mathscr{T}_n^k(x; \wtc)$ coincides with the polynomial $\widetilde{\mathcal{T}}_n^k(x)$ defined in $\cref{packed-alt-polynomial}$, i.e., $\mathscr{T}_n^k(x) = \widehat{\scB}_n^k(x)$.
\end{theorem}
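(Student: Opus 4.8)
The plan is to prove $\mathscr{T}_n^k(x) = \widetilde{\mathcal{T}}_n^k(x)$ by exhibiting an explicit weight-preserving bijection $\Phi \colon \widetilde{\mathcal{T}}_n^k \to \mathscr{T}_n^k$; since $\widetilde{\mathcal{T}}_n^k(x) = \widehat{\scB}_n^k(x)$ was already established in \cref{s3-1}, the theorem follows. The construction is a colored, rectangular adaptation of Nadeau's correspondence between packed alternative tableaux and alternative trees. I set up the dictionary as follows: label the $n+1$ rows of a packed tableau by $0,1,\dots,n$ from bottom to top and the $k+1$ columns by $0,1,\dots,k$ from left to right, and identify row $i$ with the red vertex $\piros{i}$ and column $j$ with the blue vertex $\kek{j}$. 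The extra bottom row then corresponds to the red root $\piros{0}$ and the extra leftmost column to the blue root $\kek{0}$, and the red/blue counts match the row/column counts.

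Next I define $\Phi$ by reading the arrows as \emph{parent pointers}: a left arrow $\leftarrow$ in cell $(i,j)$ declares that the parent of $\piros{i}$ is $\kek{j}$, and a down arrow $\downarrow$ in cell $(i,j)$ declares that the parent of $\kek{j}$ is $\piros{i}$. The packing axioms translate transparently: each non-bottom row (resp.\ non-leftmost column) carrying exactly one $\leftarrow$ (resp.\ one $\downarrow$) says that every non-root red (resp.\ blue) vertex has a unique parent, while the empty bottom row and leftmost column say that $\piros{0}$ and $\kek{0}$ are parentless. A left arrow sends a red vertex to a blue one and a down arrow a blue vertex to a red one, so the color alternation in tree condition (3) is automatic, and no cell can carry both arrows since that would force the $2$-cycle ``parent of $\piros{i}$ is $\kek{j}$ and parent of $\kek{j}$ is $\piros{i}$''.

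The heart of the argument, and the step I expect to be the main obstacle, is to match the tableau's emptiness axiom (every cell pointed by an arrow is empty) with tree condition (4). I would first reduce condition (4) to the local statement \emph{every grandchild carries a strictly larger label than its grandparent}: same-color descendants occur exactly at even distance, and iterating the grandchild inequality along a same-color chain recovers the full condition. I would then check that each instance of ``pointed cell is empty'' is exactly one such grandchild inequality. For example, forbidding a $\downarrow$ above a $\leftarrow$ in cell $(i,j)$ means that the unique $\downarrow$ of column $j$, which records the parent $\piros{r}$ of $\kek{j}$, satisfies $r < i$; since $\piros{i}$ is then a child of $\kek{j}$ and hence a grandchild of $\piros{r}$, this is precisely $\piros{i} > \piros{r}$. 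The remaining pointed-cell configurations yield the analogous inequalities (of the same color through a red middle vertex, and of the opposite convention through a blue one), so the emptiness axiom holds if and only if the grandchild condition, equivalently condition (4), holds. The same strict inequality also guarantees acyclicity: along any putative cycle the same-color labels would strictly decrease, which is impossible; hence the parent pointers really form a forest, and with the two unique roots $\piros{0}, \kek{0}$ it is a pair of trees. This makes $\Phi$ a well-defined bijection, inverse to the ``draw each parent pointer as an arrow'' map.

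Finally I would verify weight preservation. A blue child of $\piros{0}$ is a blue vertex $\kek{j}$ whose parent is $\piros{0}$, i.e.\ a column $j$ whose down arrow lies in the bottom row; such a $\kek{j}$ is a non-leaf exactly when it has a (red) child, i.e.\ exactly when column $j$ contains a left arrow. Hence the non-leaf blue children of $\piros{0}$ correspond bijectively to the columns that contain a $\leftarrow$ and carry a $\downarrow$ in the bottom row, so $\wtc(\Phi(\lambda)) = \wttl(\lambda)$ for every $\lambda \in \widetilde{\mathcal{T}}_n^k$. Therefore $\Phi$ preserves weights, and we conclude $\mathscr{T}_n^k(x) = \widetilde{\mathcal{T}}_n^k(x) = \widehat{\scB}_n^k(x)$.
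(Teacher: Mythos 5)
Your proposal is correct and uses exactly the bijection the paper has in mind: the paper's proof simply draws the local dictionary (a $\leftarrow$ in the cell of row $\piros{a}$, column $\kek{b}$ makes $\kek{b}$ the parent of $\piros{a}$, a $\downarrow$ makes $\piros{a}$ the parent of $\kek{b}$) and asserts that it "can easily be checked" to give a weight-preserving bijection $(\widetilde{\mathcal{T}}_n^k, \wttl) \to (\mathscr{T}_n^k, \wtc)$. You have merely written out the checks the paper leaves implicit — the equivalence of the emptiness axiom with the grandchild inequality (hence with tree condition (4)), acyclicity, and the identification of $\wttl$ with $\wtc$ — all of which are correct.
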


\begin{proof}
	We can easily check that the map $\widetilde{\mathcal{T}}_n^k \to \mathscr{T}_n^k$ defined by
	\begin{figure}[H]
	\begin{tikzpicture}
		\draw (-1/2,0)--(0,0); \draw [double](0,0)--(2,0); \draw (-1/2,-1/2)--(2,-1/2); \draw (-1/2,1/2)--(2,1/2); \draw (-1/2,1)--(2,1); \draw (-1/2,3/2)--(2,3/2); 
		\draw (0,-1/2)--(0,0); \draw [double](0,0)--(0,3/2); \draw (-1/2,-1/2)--(-1/2,3/2); \draw (1/2,-1/2)--(1/2,3/2); \draw (1,-1/2)--(1,3/2); \draw (3/2,-1/2)--(3/2,3/2); \draw (2,-1/2)--(2,3/2); 
		\draw (-3/4,-1/4)node{$\piros{0}$}; \draw (-3/4,1/4+1/8)node{$\piros{\vdots}$}; \draw (-3/4,3/4)node{$\piros{a}$}; \draw (-3/4,5/4)node{$\piros{n}$}; \draw (-1/4, -3/4)node{$\kek{0}$}; \draw (1/4, -3/4)node{$\kek{\cdots}$}; \draw (3/4, -3/4)node{$\kek{b}$}; \draw (5/4, -3/4)node{$\kek{\cdots}$}; \draw (7/4, -3/4)node{$\kek{k}$}; \draw (3/4,3/4)node{$\leftarrow$}; \draw [dashed](-1/2,3/4)--(1/2,3/4); \draw [dashed](3/4,-1/2)--(3/4,1/2); 
		
		\draw(5/2, 1/2)node{$\mapsto$}; 
		
		\draw (3,0)node{$\piros{a}$}; \draw (3,1)node{$\kek{b}$}; \draw (3,1/4)--(3,3/4);

		\draw (-1/2+5,0)--(0+5,0); \draw [double](0+5,0)--(2+5,0); \draw (-1/2+5,-1/2)--(2+5,-1/2); \draw (-1/2+5,1/2)--(2+5,1/2); \draw (-1/2+5,1)--(2+5,1); \draw (-1/2+5,3/2)--(2+5,3/2); 
		\draw (0+5,-1/2)--(0+5,0); \draw [double](0+5,0)--(0+5,3/2); \draw (-1/2+5,-1/2)--(-1/2+5,3/2); \draw (1/2+5,-1/2)--(1/2+5,3/2); \draw (1+5,-1/2)--(1+5,3/2); \draw (3/2+5,-1/2)--(3/2+5,3/2); \draw (2+5,-1/2)--(2+5,3/2); 
		\draw (-3/4+5,-1/4)node{$\piros{0}$}; \draw (-3/4+5,1/4+1/8)node{$\piros{\vdots}$}; \draw (-3/4+5,3/4)node{$\piros{a}$}; \draw (-3/4+5,5/4)node{$\piros{n}$}; \draw (-1/4+5, -3/4)node{$\kek{0}$}; \draw (1/4+5, -3/4)node{$\kek{\cdots}$}; \draw (3/4+5, -3/4)node{$\kek{b}$}; \draw (5/4+5, -3/4)node{$\kek{\cdots}$}; \draw (7/4+5, -3/4)node{$\kek{k}$}; \draw (3/4+5,3/4)node{$\downarrow$}; \draw [dashed](-1/2+5,3/4)--(1/2+5,3/4); \draw [dashed](3/4+5,-1/2)--(3/4+5,1/2); 
		
		\draw(5/2+5, 1/2)node{$\mapsto$}; 
		
		\draw (3+5,0)node{$\kek{b}$}; \draw (3+5,1)node{$\piros{a}$}; \draw (3+5,1/4)--(3+5,3/4);
	\end{tikzpicture}
	\end{figure}
	is a bijection $(\widetilde{\mathcal{T}}_n^k, \wttl) \to (\mathscr{T}_n^k, \wtc)$.
\end{proof}

Under the above bijection, the packed alternative tableau given in \cref{pAltTab76} corresponds to the double alternative tree in \cref{douAltTree76}.

\begin{theorem}
	There is a bijection $(\mathscr{T}_n^k, \wtc) \to (\mathcal{C}_n^k, \wclr)$, i.e., the polynomial $\mathscr{T}_n^k(x)$ coincides with the polynomial $\mathcal{C}_n^k(x)$ defined in $\cref{def-Callan-polynomial}$.
\end{theorem}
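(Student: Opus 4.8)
The plan is to realise the bijection as a colour-respecting pre-order traversal of the two trees. Given $(T_1,T_2)\in\mathscr{T}_n^k$, I would read each tree in pre-order after deleting its root, always descending into the children of a vertex in \emph{decreasing} order of their labels: let $S_1$ be the word read off from $T_1\setminus\{\piros 0\}$ and $S_2$ the word read off from $T_2\setminus\{\kek 0\}$, and set $\Phi(T_1,T_2)=(S_1,S_2)$. Since $\piros 0$ is red its visited children are blue, so $S_1$ begins with a blue letter, and symmetrically $S_2$ begins with a red letter; the non-root labels are exactly $\{\piros 1,\dots,\piros n,\kek 1,\dots,\kek k\}$, so the two words carry the correct letters. (On the running example of \cref{douAltTree76} this already reproduces the double Callan permutation $S_1=\kek 6\piros{76}\kek{51}\piros{31}\kek 3\piros 2$, $S_2=\piros 4\kek 4\piros 5\kek 2$.)

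First I would check that $\Phi(T_1,T_2)\in\mathcal{C}_n^k$, the only nontrivial point being that each maximal monochromatic run is decreasing. Two letters of the same colour are consecutive in a pre-order word only when the earlier one is a leaf and the later one is the vertex visited next after backtracking; that vertex is either a not-yet-visited sibling (smaller, because siblings are listed in decreasing order) or a child of a strictly higher ancestor. In the latter case the trailing leaf lies in the subtree of some root-child $c_i$ and so, by the condition that same-coloured descendants exceed their ancestor, is larger than $c_i$, which in turn exceeds the next root-child $c_{i+1}$ visited; hence the run stays decreasing. This settles membership in $\mathcal{C}_n^k$.

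Next I would produce the inverse, which simultaneously proves bijectivity. Let $\beta$ be the subword of blue letters of $S_1$ in order of appearance. The children of $\piros 0$ are exactly the left-to-right minima of $\beta$: each root-child $c_i$ is the smallest blue label in its own subtree (again by the monotonicity condition) and its subtree is scanned contiguously in pre-order, so the $c_i$ occur precisely as the successive record lows of $\beta$. Cutting $S_1$ at these record lows splits it into the pre-order words of the subtrees of the $c_i$; deleting $c_i$ from its segment and recursing with the colours exchanged (the children of $c_i$ are the left-to-right minima of the red letters of that segment) reconstructs each subtree uniquely, and one checks that the tree so produced automatically satisfies the colour-alternation and monotonicity conditions. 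This recursion is forced and inverts $\Phi$, so $\Phi$ is a bijection; equinumerosity is in any case guaranteed by $\mathscr{T}_n^k(1)=\widehat{\scB}_n^k(1)=\mathcal{C}_n^k(1)$.

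Finally, for the weight I would note that a blue vertex ends a blue block of $S_1$ exactly when it is immediately followed by a red letter, i.e. exactly when it has a red child, so the blue blocks that are \emph{ordinary} (red-terminated) correspond bijectively to the non-leaf blue vertices of $T_1$, each being the minimum of its block. Thus the sequence $\pi$ of \cref{ordinary} is the list of non-leaf blue vertices in order of first appearance, and $\wclr$ counts its left-to-right minima. Reapplying the record-low argument, a non-leaf blue vertex is a left-to-right minimum of $\pi$ iff it is a child of $\piros 0$ (a deeper non-leaf blue lies in the subtree of a non-leaf root-child $c_i$, which appears earlier and is smaller, hence is not a new minimum), so $\wclr(\Phi(T_1,T_2))$ equals the number of non-leaf blue children of $\piros 0$, which is $\wtc(T_1,T_2)$. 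I expect the main obstacle to be the bookkeeping in the third paragraph — verifying that the recursive record-low reconstruction is forced and yields a legal increasing tree — whereas membership and the weight identity both rest on the single clean observation that ordinary blue blocks correspond to blue vertices possessing a red child.
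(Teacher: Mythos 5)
Your map $\Phi$ is exactly the paper's map: the paper's recursive definition $\phi(T)=\phi(v_0)\phi(T_1)\cdots\phi(T_m)$ with subtree roots in decreasing order is precisely pre-order traversal descending into children in decreasing label order, with $\piros{0}S_1=\phi(T_1)$ and $\kek{0}S_2=\phi(T_2)$; the paper then merely asserts bijectivity and weight-preservation, so your verification is the real content here, and your inverse construction (cutting at successive record lows of the blue subword, then recursing with colours exchanged) is correct. The genuine gap is in your weight paragraph. The claim that a blue vertex is immediately followed by a red letter if and only if it has a red child is false: after a blue \emph{leaf} the traversal backtracks, and the vertex visited next can be red. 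Take $T_1$ with root $\piros{0}$, single child $\kek{1}$, children $\piros{2}>\piros{1}$ of $\kek{1}$, and a single child $\kek{2}$ of $\piros{2}$ (and $T_2=\kek{0}$ alone); then $S_1=\kek{1}\,\piros{2}\,\kek{2}\,\piros{1}$, and the leaf $\kek{2}$ is followed by the red letter $\piros{1}$. Hence the sequence of \cref{ordinary} is $\pi=(\kek{1},\kek{2})$, not your ``list of non-leaf blue vertices'' $(\kek{1})$: in general $\pi$ consists of all blue letters immediately followed by a red letter, and this can strictly contain the set of non-leaf blue vertices. The theorem is safe and your computation can be repaired by one extra observation: a blue leaf entering $\pi$ is necessarily a \emph{proper} descendant of some root-child $c_i$ (a leaf root-child is followed by a blue sibling or ends $S_1$, hence never enters $\pi$); that $c_i$ is then non-leaf, occurs earlier in $\pi$, and is smaller than the leaf by the condition that same-coloured descendants exceed their ancestor. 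So these extra entries are never left-to-right minima, the record lows of $\pi$ are still exactly the non-leaf blue children of $\piros{0}$, and $\wclr(\Phi(T_1,T_2))=\wtc(T_1,T_2)$ follows.

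The same blind spot (backtracking that jumps more than one level) leaves your membership argument incomplete as written: when the vertex $w$ following a trailing leaf $v$ is a child of a strictly higher ancestor $a$, you compare $v$ with a \emph{root}-child $c_i$ and take $w$ to be the next root-child $c_{i+1}$; but $a$ need not be the root, and then $c_i$ can even have the opposite colour to $v$, making the comparison meaningless (extend the example above by a child $\piros{3}$ of $\kek{2}$: the run $\piros{3}\,\piros{1}$ arises by backtracking two levels, while the unique root-child $\kek{1}$ is blue). The argument is saved by running it at the correct level: let $u$ be the child of $a$ on the path down to $v$; then $v$ is a same-coloured proper descendant of $u$, so $v>u$, and $u>w$ because $w$ is a later sibling of $u$, whence $v>w$. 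With these two local repairs your proof is complete and supplies exactly the details the paper omits.
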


\begin{proof}
	We define a bijection $\phi$ from the set of labeled rooted trees to the set of strings inductively. For a singleton $T = v$, we put $\phi(T) = v$. Let $T$ be the following rooted tree:
	\begin{figure}[H]
	\begin{tikzpicture}
		\draw (2,1)node{$v_0$}; \draw (1,1/4)--(2,1-1/4); \draw (3,1/4)--(2,1-1/4);
		\draw (0,1/2)node{$T = $}; \draw (1,0)node{$T_m$}; \draw (2,0)node{$\cdots$}; \draw (3,0)node{$T_1$};
	\end{tikzpicture}
	\end{figure}
	In this expression, assume that $T_1, \dots, T_m$ are rooted trees, the roots $R(T_1), \dots, R(T_m)$ of which satisfy the condition $R(T_m) < \cdots < R(T_1)$. We define $\phi(T) = \phi(v_0) \phi(T_1) \cdots \phi(T_m)$. The desired bijection is given by $\piros{0} S_1 = \phi(T_1)$ and $\kek{0} S_2 = \phi(T_2)$, which preserves the weight.
\end{proof}

\begin{example}\label{doubleCallan-barredCallan}
Under the above bijection, the example given in \cref{douAltTree76} corresponds to $(S_1, S_2) = (\kek{6} \piros{76} \kek{51} \piros{31} \kek{3} \piros{2}, \piros{4} \kek{4} \piros{5} \kek{2})$.
\end{example}

In conclusion, we obtain another bijection between $\mathcal{C}_n^k$ and $\mathcal{T}_n^k$ by a sequence of bijections
\begin{align}\label{sequence-bijections}
	(\mathcal{T}_n^k, \wtl) \to (\widetilde{\mathcal{T}}_n^k, \wttl) \to (\mathscr{T}_n^k, \wtc) \to (\mathcal{C}_n^k, \wclr).
\end{align}

By the bijection in \cref{direct-bijection-CT}, the alternative tableau in \cref{AltTab76} corresponds to the double Callan permutation $(\kek{6}\piros{21} \kek{4}\piros{4} \kek{5}, \piros{76}\kek{32} \piros{53}\kek{1})$ as explained in \cref{ExaExa}. On the other hand, by the bijection in \cref{sequence-bijections}, the alternative tableau corresponds to $(\kek{6} \piros{76} \kek{51} \piros{31} \kek{3} \piros{2}, \piros{4} \kek{4} \piros{5} \kek{2})$ as in \cref{doubleCallan-barredCallan}. The difference arises from the existence of two weight functions $\wtst$ and $\wtl$ for the set of alternative tableaux $\mathcal{T}_n^k$.

By translating the weight $\wtst$ via the bijections $\mathcal{T}_n^k \to \widetilde{\mathcal{T}}_n^k \to \mathscr{T}_n^k \to \mathcal{C}_n^k$, we can obtain new weight functions for $\widetilde{\mathcal{T}}_n^k, \mathscr{T}_n^k$, and $\mathcal{C}_n^k$. For example, the new weight $\wcrb: \mathcal{C}_n^k \to \Z_{\geq 0}$ is defined as follows:

\begin{definition}
	For each double Callan permutation $\lambda = (S_1, S_2) \in \mathcal{C}_n^k$,
	\begin{enumerate}
		\item if the string $S_2$ does not start from $\piros{n}$, then mark the blue element just before $\piros{n}$, and if $S_2$ starts from $\piros{n}$, then we stop the steps,
		\item consider the next largest red element, 
			\begin{itemize}
				\item[(i)] if the element is the leading element of $S_2$, then we stop the steps,
				\item[(ii)] if the element is after a blue element and the blue element is smaller than the last marked element, then we mark the blue element,
				\item[(iii)] otherwise, we do nothing, and
			\end{itemize}
		\item repeat Step (2) until we reach $\piros{1}$ or until the step stops.
	\end{enumerate}
	Then, we define $\wcrb(\lambda)$ by the number of marked blue elements.
\end{definition}

\begin{example}
	Let $\lambda_0 \in \mathcal{T}_7^6$ be as in \cref{AltTab76}. By the bijections of \cref{sequence-bijections}, $\lambda_0$ corresponds to the double Callan permutation $\lambda = (\kek{6} \piros{76} \kek{51} \piros{31} \kek{3} \piros{2}, \piros{4} \kek{4} \piros{5} \kek{2})$, as in \cref{doubleCallan-barredCallan}.
	
	We first mark $\kek{6}$. Since the next largest red element $\piros{6}$ is located after $\piros{7}$, we ignore this element. The next $\piros{5}$ is after a blue element. Since $\kek{4}$ is smaller than the last marked $\kek{6}$, we mark $\kek{4}$. The next $\piros{4}$ is the leading element of $S_2$. Thus, we stop the steps here. The weight is given by $\wcrb(\lambda) = \#\{\kek{6}, \kek{4}\} = 2$, which coincides with $\wtst(\lambda_0)$. In particular, the indicated left arrows in \cref{AltTab76} are in $4$th and $6$th columns.
\end{example}

\begin{corollary}
	We have a bijection $(\mathcal{T}_n^k, \wtst) \to (\mathcal{C}_n^k, \wcrb)$, i.e., $\mathcal{C}_n^k(x; \wcrb) = \mathcal{T}_n^k (x; \wtst) = \widehat{\mathscr{B}}_n^k(x)$.
\end{corollary}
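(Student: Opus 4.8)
The plan is to exploit the fact that the three maps composed in \cref{sequence-bijections} already constitute one fixed bijection of sets $F\colon \mathcal{T}_n^k \to \mathcal{C}_n^k$, independent of any weight. It therefore suffices to prove that the marking procedure defining $\wcrb$ computes exactly the transported weight, i.e. that $\wcrb(F(\lambda)) = \wtst(\lambda)$ for every $\lambda \in \mathcal{T}_n^k$. Granting this, the identity $\mathcal{C}_n^k(x;\wcrb) = \mathcal{T}_n^k(x;\wtst)$ is automatic because $F$ is bijective, and the final equality with $\widehat{\scB}_n^k(x)$ is \cref{T=C}. So the whole content of the corollary is the matching of two a priori different weights along $F$.

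To carry this out I would trace the set of left arrows counted by $\wtst$ through the three bijections. Passing to packed tableaux only adds the bottom row and leftmost column, so the counted arrows survive unchanged; under the tableau-to-tree map of \cref{double-alt-tree-poly} an arrow $\leftarrow$ in row $\piros a$ and column $\kek b$ (with $b\ge 1$) becomes the edge making the red vertex $\piros a$ a child of the blue vertex $\kek b$. A row $\piros a$ fails to contain a left arrow in the original tableau precisely when its unique packed left arrow lies in the $\kek 0$ column, that is, when $\piros a$ is a child of $\kek 0$. Consequently the topmost block of consecutive left-arrow rows consists exactly of the reds $\piros n, \piros{n-1}, \dots$ lying strictly above the largest child of $\kek 0$; under the map $\phi$ that largest child is the leading red of $S_2$, so $\wtst$ and the marking procedure terminate at the same place.

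It then remains to match the two record rules. The weight $\wtst$ counts the left arrows whose column is strictly smaller than every left-arrow column in the rows above, i.e. the top-down record minima of the parent-columns $b(a)$. A strict record forces $\piros a$ to be the \emph{largest} child of its blue parent $\kek{b(a)}$, since a larger sibling would repeat the same column and destroy the record, and for such a red the parent $\kek{b(a)}$ is emitted immediately before $\piros a$ in $S_1$ or $S_2$ by the recursive definition of $\phi$. On the other side, scanning the reds in decreasing order, the marking procedure marks the immediately preceding blue exactly when it is smaller than the last marked one, so it records the running minima of the blue-parent labels. These two lists coincide once one checks that the running minimum taken over all reds in the block equals the running minimum taken only over the largest-child reds, which holds because every block red shares its parent-column with a largest-child block red of at least the same index.

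The main obstacle is precisely this reconciliation of the two ``record'' formulations: verifying that the column-record condition built into $\wtst$ and the ``smaller than the last marked blue'' condition of the marking procedure single out the same blue vertices. This rests on the two bookkeeping facts isolated above (a strict column record can occur only at a largest child, and restricting the running minimum to largest-child reds leaves it unchanged), together with the careful translation of row and column indices into red and blue labels across $\phi$, which is where order-errors are most likely to creep in.
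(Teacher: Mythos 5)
Your overall strategy is the paper's own: the three maps in \cref{sequence-bijections} compose to a fixed set bijection $F\colon \mathcal{T}_n^k \to \mathcal{C}_n^k$, and the corollary reduces to the single identity $\wcrb(F(\lambda)) = \wtst(\lambda)$, after which \cref{T=C} finishes the argument. The paper gets that identity essentially by construction ($\wcrb$ is \emph{introduced} as the transport of $\wtst$ along $F$, illustrated on an example), so the entire substance of your proposal is the verification, and your tracing of the data is correct up to the last step: packing only adds a border, a left arrow in row $\piros{a}$ and column $\kek{b}$ means $\piros{a}$ is a child of $\kek{b}$, rows without left arrows are exactly the children of $\kek{0}$, and both the staircase scan and the marking procedure terminate at the largest child of $\kek{0}$, i.e.\ at the leading red of $S_2$.

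The gap is in your reconciliation of the two record rules. The sentence ``the marking procedure \ldots records the running minima of the blue-parent labels'' tacitly assumes that whenever a scanned red $\piros{a}$ is immediately preceded by a blue element, that blue is the \emph{parent} of $\piros{a}$. Under $\phi$ this is true only for largest children; a non-largest child $\piros{a}$ of $\kek{b}$ is preceded by the last vertex of its next-elder sibling's subtree string, which can be a blue $\kek{c}\neq\kek{b}$, and rule (2)(ii) then compares this $\kek{c}$ with the last marked blue. Neither of your two bookkeeping facts excludes a spurious mark there, and they genuinely cannot: in the configuration where $\kek{5}$ has children $\piros{3}>\piros{2}$ and $\kek{1}$ hangs from $\piros{3}$, the string fragment is $\kek{5}\piros{3}\kek{1}\piros{2}$, the parent labels are $5,5$ with a single record, yet the procedure would mark both $\kek{5}$ and $\kek{1}$ --- and both of your facts hold in that configuration. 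What forbids it is condition (4) in the definition of double alternative trees (same-colored descendants are larger): any such $\kek{c}$ is a blue descendant of $\kek{b}$, hence $\kek{c}>\kek{b}$, while the last marked blue is at most $b$ because the elder sibling with the same parent was already scanned and the last marked blue equals the running minimum of the parent labels (an invariant you also need to state and carry through the induction). With this third fact added, the non-largest children can never trigger rule (2)(ii), marks occur exactly at parent-label records, and your proof closes; as written, the crucial step is unjustified.
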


\section{Further combinatorial models and weights}\label{s4}

In this section, we provide another combinatorial model $(\mathcal{E}_n^k, \welr)$ and prove that the polynomial $\mathcal{E}_n^k(x; \welr)$ is equal to $\widehat{\mathscr{B}}_n^k(x)$. We here explain two types of proofs. One type is by checking the recursion \cref{recurrence}, and the other type is by constructing a bijection $(\mathcal{C}_n^k, \wcRL) \to (\mathcal{E}_n^k, \welr)$ with a new weight $\wcRL: \mathcal{C}_n^k \to \Z_{\geq 0}$.

\subsection{Excedance set of permutations}\label{s4-1}

We introduce the fifth combinatorial set for the symmetrized poly-Bernoulli polynomial $\widehat{\scB}_n^k(x)$ using an excedance set of a permutation, which was studied by Ehrenborg--Steingr\'{i}msson~\cite{EhrenborgSteingrimsson2000}. 

\begin{definition}
	Let $[n] = \{1, 2, \dots, n\}$. An \emph{excedance set} of a permutation $\lambda: [n] \twoheadrightarrow [n]$ is defined by $E (\lambda) = \{i \in [n] \mid \lambda(i) > i\}$. For non-negative integers $n, k \geq 0$, we set $\mathcal{E}_n^k = \{\lambda: [n+k+1] \twoheadrightarrow [n+k+1] \mid E(\lambda) = [n]\}$.
\end{definition}

\begin{example}
	The following lists all elements $\lambda = \smat{1 & 2 & 3 & 4 \\ \lambda(1) & \lambda(2) & \lambda(3) & \lambda(4)}$ in $\mathcal{E}_2^1$.
	\begin{align*}
		&\pmat{1 & 2 & 3 & 4 \\ 2 & 3 & 1 &4}, \pmat{1 & 2 & 3 & 4 \\ 2 & 4 & 1 & 3}, \pmat{1 & 2 & 3 & 4 \\ 2 & 4 & 3 & 1}, \pmat{1 & 2 & 3 & 4 \\ 3 & 4 & 1 & 2},\\
		&\pmat{1 & 2 & 3 & 4 \\ 3 & 4 & 2 & 1}, \pmat{1 & 2 & 3 & 4 \\ 4 & 3 & 1 & 2}, \pmat{1 & 2 & 3 & 4 \\ 4 & 3 & 2 & 1}
	\end{align*}
\end{example}

We define the weight function $\welr: \mathcal{E}_n^k \to \Z_{\geq 0}$ by the left-to-right minimum.

\begin{definition}
	For each $\lambda \in \mathcal{E}_n^k$, we consider $n < i \leq n+k+1$ such that if $n < j < i$, then $\lambda(i) < \lambda(j)$. We let $\welr(\lambda)$ denote the number of such $i$ reduced by one.
\end{definition}

For instance, the permutation 
\begin{align}\label{Excedance76}
	\lambda = \pmat{1 & 2 & 3 & 4 & 5 & 6 & 7 \\ 7 & 5 & 3 & 2 & 4 & 6 & 1} \in \mathcal{E}_2^4
\end{align}
has the weight $\welr(\lambda) = \#\{3, 4, 7\} - 1 = 2$.

We can express elements in $\mathcal{E}_n^k$ using the following chessboard of size $n+k+1$ with cracked squares, (see Clark--Ehrenborg~\cite{ClarkEhrenborg2010}).

\begin{figure}[H]
\begin{tikzpicture}
	\fill [lightgray] (0,0)--(1,0)--(1,1)--(1/2,1)--(1/2,1/2)--(0,1/2)--(0,0)--cycle;
	\fill [lightgray] (1,3)--(5/2,3)--(5/2,5/2)--(4/2,5/2)--(4/2,4/2)--(3/2,4/2)--(3/2,3/2)--(2/2,3/2)--(1,3)--cycle;

	\draw (0,0)--(3,0); \draw(0,1/2)--(3,1/2); \draw(0,2/2)--(3,2/2); \draw[double](0,3/2)--(3,3/2); \draw(0,4/2)--(3,4/2); \draw(0,5/2)--(3,5/2); \draw(0,3)--(3,3);
	\draw (0,0)--(0,3); \draw (1/2,0)--(1/2,3); \draw[double] (2/2,0)--(2/2,3); \draw (3/2,0)--(3/2,3); \draw (4/2,0)--(4/2,3); \draw (5/2,0)--(5/2,3); \draw (6/2,0)--(6/2,3); 
	\draw (1/2,-1/4)node{$n$}; \draw (4/2,-1/4)node{$k+1$}; \draw (-1/2,3/4)node{$n+1$}; \draw (-1/2,9/4)node{$k$}; 
	
	\fill [lightgray] (5,0)--(5+1/2,0)--(5+1/2,1/2)--(5,1/2)--cycle; \draw (5,0)--(5+1/2,0)--(5+1/2,1/2)--(5,1/2)--cycle; \draw (7,1/4)node{: cracked square};
\end{tikzpicture}
\end{figure}

Then, the ways of placing $n+k+1$ non-attacking rooks on a cracked chessboard correspond to the elements of $\mathcal{E}_n^k$. For instance, the element $\lambda \in \mathcal{E}_7^6$ given in \cref{Excedance76} is expressed as follows:

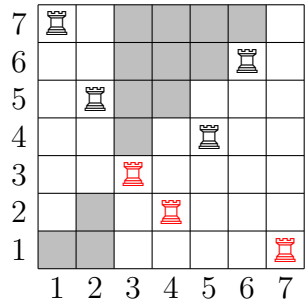
\begin{figure}[H]
\begin{tikzpicture}
	\fill [lightgray] (0,0)--(1,0)--(1,1)--(1/2,1)--(1/2,1/2)--(0,1/2)--cycle;
	\fill [lightgray] (7/2,7/2)--(1,7/2)--(1,3/2)--(3/2,3/2)--(3/2,2)--(2,2)--(2,5/2)--(5/2,5/2)--(5/2,3)--(3,3)--(3,7/2)--cycle;
	
	\draw (0,0)--(7/2,0); \draw (0,1/2)--(7/2,1/2); \draw (0,2/2)--(7/2,2/2); \draw (0,3/2)--(7/2,3/2); \draw (0,4/2)--(7/2,4/2); \draw (0,5/2)--(7/2,5/2); \draw (0,6/2)--(7/2,6/2); \draw (0,7/2)--(7/2,7/2); 
	\draw (0,0)--(0,7/2); \draw (1/2,0)--(1/2,7/2); \draw (2/2,0)--(2/2,7/2); \draw (3/2,0)--(3/2,7/2); \draw (4/2,0)--(4/2,7/2); \draw (5/2,0)--(5/2,7/2); \draw (6/2,0)--(6/2,7/2); \draw (7/2,0)--(7/2,7/2); 
	
	\draw (0+1/4, 3+1/4)node{$\rook$}; \draw (1/2+1/4, 2+1/4)node{$\rook$}; \draw (2/2+1/4, 1+1/4)node{$\piros{\rook}$}; \draw (3/2+1/4, 1/2+1/4)node{$\piros{\rook}$}; \draw (4/2+1/4, 3/2+1/4)node{$\rook$}; \draw (5/2+1/4, 5/2+1/4)node{$\rook$}; \draw (6/2+1/4, 0+1/4)node{$\piros{\rook}$}; 
	
	\draw (0+1/4,-1/4)node{$1$}; \draw (1/2+1/4,-1/4)node{$2$}; \draw (2/2+1/4,-1/4)node{$3$}; \draw (3/2+1/4,-1/4)node{$4$}; \draw (4/2+1/4,-1/4)node{$5$}; \draw (5/2+1/4,-1/4)node{$6$}; \draw (6/2+1/4,-1/4)node{$7$}; 
	
	\draw (-1/4,0+1/4)node{$1$}; \draw (-1/4,1/2+1/4)node{$2$}; \draw (-1/4,2/2+1/4)node{$3$}; \draw (-1/4,3/2+1/4)node{$4$}; \draw (-1/4,4/2+1/4)node{$5$}; \draw (-1/4,5/2+1/4)node{$6$}; \draw (-1/4,6/2+1/4)node{$7$}; 
\end{tikzpicture}
\caption{The expression of $\lambda =\pmat{1 & 2 & 3 & 4 & 5 & 6 & 7 \\ 7 & 5 & 3 & 2 & 4 & 6 & 1} \in \mathcal{E}_2^4$ with the indication of its weight.}
\label{Example-E76}
\end{figure}

\begin{theorem}\label{Excedance-theorem}
	Let $\mathcal{E}_n^0 (x) = \mathcal{E}_0^k(x) = 1$. For any integers $n, k \geq 0$, the polynomial
	\[
		\mathcal{E}_n^k(x) = \mathcal{E}_n^k (x; \welr) = \sum_{\lambda \in \mathcal{E}_n^k} x^{\welr(\lambda)}
	\]
	coincides with the polynomial $\widehat{\scB}_n^k(x)$.
\end{theorem}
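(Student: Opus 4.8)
The plan is to prove that the family $\mathcal{E}_n^k(x) = \mathcal{E}_n^k(x;\welr)$ satisfies the recursion \eqref{recurrence} together with the stated boundary values $\mathcal{E}_n^0(x) = \mathcal{E}_0^k(x) = 1$, and then to conclude by the same uniqueness argument used in \cref{T=C}: since $\widehat{\scB}_n^k(x)$ is determined by \eqref{recurrence} and these initial conditions, any family satisfying both must equal it. The boundary values are immediate: $\mathcal{E}_n^0$ contains only the $(n+1)$-cycle $1 \mapsto 2 \mapsto \cdots \mapsto n+1 \mapsto 1$, and $\mathcal{E}_0^k$ only the identity of $[k+1]$; in each case the non-excedance sequence has a single left-to-right minimum, so $\welr = 0$ and the polynomial is $x^0 = 1$.

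For the recursion I would set up a deletion map $\mathcal{E}_n^k \to \bigsqcup_{j=0}^{n} \mathcal{E}_j^{k-1}$ built from cycle surgery on the largest value $N := n+k+1$. Write $a = \lambda^{-1}(N)$ and $b = \lambda(N)$; since $\lambda(i) \le i$ for $i > n$, one has $a \in [n] \cup \{N\}$. If $a = N$ (a fixed point) we simply delete it; otherwise we bypass, setting $\lambda'(a) = b$ and deleting the symbol $N$. The crucial point is that the value $1$ always occupies a non-excedance position, since a position $i$ with $\lambda(i) = 1$ cannot satisfy $\lambda(i) > i$; hence the running minimum of the non-excedance sequence is eventually $1$. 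Consequently the reverse operation---inserting $N$ either as a fixed point at position $N$, or splicing it above one of the $n$ excedance positions $a \in [n]$ via $\lambda(a) = N$, $\lambda(N) = \mu(a)$---always appends at position $N$ a value $\mu(a) \ge 2 > 1$, which can never be a new left-to-right minimum. Thus each $\mu \in \mathcal{E}_n^{k-1}$ has exactly $n+1$ weight-preserving preimages, accounting for the term $(n+1)\,\widehat{\scB}_n^{k-1}(x)$.

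The remaining two terms come from the cases $a \in [n]$ with $b \le a$, where the bypass turns position $a$ into a non-excedance position and leaves the excedance set equal to $[n] \setminus \{a\}$. This is not an initial segment unless $a = n$, so one must cascade, deleting further symbols to restore the initial-segment form---exactly the phenomenon behind the auxiliary map $\varphi$ of \cref{def-auxiliary}. The cascade terminates in some $\mathcal{E}_j^{k-1}$ with $j < n$, having removed $n-j+1$ symbols in total; the binomial coefficients $\binom{n}{j}$ and $\binom{n}{j-1}$ should enumerate the possible relabelings of the deleted block, and the split between them should record whether the cascade destroys a left-to-right minimum of the non-excedance sequence (decreasing $\welr$ by one, the factor $x$ in the second term) or not (the third term).

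The main obstacle is precisely this enumeration of the cascade: one must show that the fibres of the map over $\mathcal{E}_j^{k-1}$ have sizes exactly $\binom{n}{j}$ and $\binom{n}{j-1}$, and that $\welr$ changes by exactly the advertised amount, by exhibiting an explicit inverse (insertion) map and a case analysis keyed on the position of the value $1$ relative to the inserted block. An alternative that sidesteps the cascade is the bijective route indicated before the statement: construct a weight-preserving bijection $(\mathcal{C}_n^k, \wcRL) \to (\mathcal{E}_n^k, \welr)$ for the new weight $\wcRL$, and separately prove that $\wcRL$ is equidistributed with $\wclr$ on $\mathcal{C}_n^k$, so that $\mathcal{E}_n^k(x) = \mathcal{C}_n^k(x;\wcRL) = \mathcal{C}_n^k(x;\wclr) = \widehat{\scB}_n^k(x)$ by \cref{Callan-explicit}; there the difficulty is transferred to proving the equidistribution on double Callan permutations.
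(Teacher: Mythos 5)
Your strategy coincides with the paper's: verify the boundary values and the recursion \cref{recurrence} by a deletion map keyed on the largest symbol $N=n+k+1$, splitting into the cases $a=N$, $b>a$, and $b\le a$, where $a=\lambda^{-1}(N)$ and $b=\lambda(N)$; this is precisely the map $\psi$ of \cref{psi-definition}. Your boundary computation is correct, and your treatment of the first two cases matches the paper's and is sound: in particular the observation that the value $1$ always occupies a non-excedance position, so that appending the value $\mu(a)\ge 2$ at the final position $N$ can never create a new left-to-right minimum, is exactly the right reason why these $n+1$ insertions are weight-preserving, giving the term $(n+1)\mathcal{E}_n^{k-1}(x)$.

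The genuine gap is the case $b\le a\le n$, which you explicitly leave unresolved and which produces the other two terms of \cref{recurrence}. You never specify which symbols the ``cascade'' deletes, why the resulting permutation has an initial segment as excedance set, why the fibres have the sizes you guess, or why $\welr$ changes as advertised; the phrases ``should enumerate'' and ``should record'' are doing all the work. The paper closes this concretely: together with $N$ and $a$ it deletes the set $D=\{b\le i<a \mid \lambda(i)=i+1\}$, which is exactly the set of positions in $[n]\setminus\{a\}$ whose excedance status would be destroyed by removing position $a$ and value $b$ (after relabeling, the value at such a position drops from $i+1$ to $i$), and it shows the result lies in $\mathcal{E}_{n-|D|-1}^{k-1}$. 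For fixed $a$, $b$, and $d=|D|$ the fibre over a given target has size $\binom{a-1}{d}$ when $b=1$ and $\binom{a-b}{d}$ when $b>1$ (the free choice of $D$), and the hockey-stick identities $\sum_{a=1}^{n}\binom{a-1}{d}=\binom{n}{d+1}$ and $\sum_{2\le b\le a\le n}\binom{a-b}{d}=\binom{n}{d+2}$ yield exactly the coefficients $\binom{n}{j}$ and $\binom{n}{j-1}$ in \cref{recurrence}. The factor $x$ arises precisely when $b=1$, because then the deleted value $1$ sat at the last non-excedance position and was a left-to-right minimum, while relabeling preserves the minimum status of all remaining positions; so your guess about the $x$ versus non-$x$ split is morally correct, but it is asserted, not proved. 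Your fallback route via $(\mathcal{C}_n^k,\wcRL)$ likewise only relocates the difficulty, as you note; for what it is worth, the paper's Section 4.2 handles that route not by proving equidistribution of $\wcRL$ with $\wclr$ but by enumerating $(\mathcal{C}_n^k,\wcRL)$ directly against the explicit formula \cref{SPB-exp}.
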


To prove this theorem, we define the auxiliary map $\psi: \mathcal{E}_n^k \to \mathcal{E}_{\leq n}^{k-1} := \bigcup_{i=0}^n \mathcal{E}_i^{k-1}$ as follows. 

\begin{definition}\label{psi-definition}
For $\lambda \in \mathcal{E}_n^k$, 
\begin{itemize}
	\item[(1)] If $\lambda(n+k+1) = n+k+1$, then $\psi(\lambda) \in \mathcal{E}_n^{k-1}$ with $\psi(\lambda) (i) = \lambda(i)$ for any $1 \leq i \leq n+k$.
	
	\begin{figure}[H]
\begin{tikzpicture}
	\draw (-1/2,1) node {$\lambda = $}; \draw (0,0)--(2,0)--(2,2)--(0,2)--cycle; \draw (3/2,0)--(3/2,2); \draw (0,3/2)--(2,3/2); \draw (3/4, 3/4) node {$\lambda'$}; \draw (3/2+1/4, 3/2+1/4) node {$\rook$}; 
	\draw (5/2,1) node{$\mapsto$}; 
	\draw (3,0)--(9/2,0)--(9/2,3/2)--(3,3/2)--cycle; \draw (3+3/4,3/4) node{$\lambda'$}; 
	\draw (9/2+3/4,1) node{$= \psi(\lambda)$};
\end{tikzpicture}
\end{figure}
	
	\item[(2)] If $\lambda(a) = n+k+1, \lambda(n+k+1) = b$, and $b > a$, then $\psi(\lambda) \in \mathcal{E}_n^{k-1}$ with $\psi(\lambda) (a) = b$ and $\psi(\lambda)(i) = \lambda(i)$ otherwise.
	
	\begin{figure}[H]
	\begin{tikzpicture}
		\draw (-1,3/2) node {$\lambda = $}; \fill [lightgray] (0,0)--(3/2,0)--(3/2,3/2)--(1,3/2)--(1,1)--(1/2,1)--(1/2,1/2)--(0,1/2)--cycle; \fill [lightgray] (5/2,3)--(3/2,3)--(3/2,2)--(2,2)--(2,5/2)--(5/2,5/2)--cycle; \draw (0,0)--(3,0); \draw (0,1/2)--(3,1/2); \draw (0,2/2)--(3,2/2); \draw (0,3/2)--(3,3/2); \draw (0,4/2)--(3,4/2); \draw (0,5/2)--(3,5/2); \draw (0,6/2)--(3,6/2); 
		\draw (0,0)--(0,3); \draw (1/2,0)--(1/2,3); \draw (2/2,0)--(2/2,3); \draw (3/2,0)--(3/2,3); \draw (4/2,0)--(4/2,3); \draw (5/2,0)--(5/2,3); \draw (6/2,0)--(6/2,3); 
		\draw (1/2+1/4,-1/4) node{$a$}; \draw (-1/4, 3/2+1/4) node{$b$}; \draw (1/2+1/4, 5/2+1/4) node {$\rook$}; \draw (5/2+1/4, 3/2+1/4) node {$\rook$};
	
		\draw (7/2, 3/2) node {$\mapsto$}; 
	
		\fill [lightgray] (4,0)--(4+3/2,0)--(4+3/2,3/2)--(4+1,3/2)--(4+1,1)--(4+1/2,1)--(4+1/2,1/2)--(4+0,1/2)--cycle; \fill [lightgray] (11/2,2)--(12/2,2)--(12/2,5/2)--(11/2,5/2)--cycle; \draw (4,0)--(13/2,0); \draw (4,1/2)--(13/2,1/2); \draw (4,2/2)--(13/2,2/2); \draw (4,3/2)--(13/2,3/2); \draw (4,4/2)--(13/2,4/2); \draw (4,5/2)--(13/2,5/2); 
		\draw (4,0)--(4,5/2); \draw (9/2,0)--(9/2,5/2); \draw (10/2,0)--(10/2,5/2); \draw (11/2,0)--(11/2,5/2); \draw (12/2,0)--(12/2,5/2); \draw (13/2,0)--(13/2,5/2); 
		\draw (9/2+1/4, 3/2+1/4) node {$\rook$};
		
		\draw (15/2, 3/2) node {$= \psi(\lambda)$}; 
	\end{tikzpicture}
	\end{figure}
	
	\item[(3)] If $\lambda(a) = n+k+1, \lambda(n+k+1) = b$, and $b \leq a \leq n$, then we set $D = \{b \leq i < a \mid \lambda(i) = i+1\} \subset [n-1]$. Then, we define $\psi'(\lambda)$ by $\psi'(\lambda) = \lambda$ on $[n+k] - (D \cup \{a\})$. From $\psi'(\lambda)$, we construct $\psi(\lambda) \in \mathcal{E}_{n-|D|-1}^{k-1}$ by rearranging numbers from $1$.
	
	\begin{figure}[H]
	\begin{tikzpicture}
		\draw (-1,3/2) node {$\lambda = $}; \fill [lightgray] (0,0)--(3/2,0)--(3/2,3/2)--(1,3/2)--(1,1)--(1/2,1)--(1/2,1/2)--(0,1/2)--cycle; \fill [lightgray] (5/2,3)--(3/2,3)--(3/2,2)--(2,2)--(2,5/2)--(5/2,5/2)--cycle; \draw (0,0)--(3,0); \draw (0,1/2)--(3,1/2); \draw (0,2/2)--(3,2/2); \draw (0,3/2)--(3,3/2); \draw (0,4/2)--(3,4/2); \draw (0,5/2)--(3,5/2); \draw (0,6/2)--(3,6/2); 
		\draw (0,0)--(0,3); \draw (1/2,0)--(1/2,3); \draw (2/2,0)--(2/2,3); \draw (3/2,0)--(3/2,3); \draw (4/2,0)--(4/2,3); \draw (5/2,0)--(5/2,3); \draw (6/2,0)--(6/2,3); 
		\draw (1+1/4,-1/4) node{$a$}; \draw (-1/4, 1/4) node{$b$}; \draw (1+1/4, 5/2+1/4) node {$\rook$}; \draw (5/2+1/4, 1/4) node {$\rook$}; \draw (1/2+1/4,1+1/4) node{$\rook$}; \draw[double] (0,1/2)--(1/2,1/2)--(1/2,1)--(0,1)--cycle; \draw[double] (1/2,1)--(1,1)--(1,3/2)--(1/2,3/2)--cycle;
	
		\draw (7/2, 3/2) node {$\mapsto$}; 
		
		\draw[very thin] (4,0)--(7,0); \draw[very thin] (4,1/2)--(7,1/2); \draw[very thin] (4,2/2)--(7,2/2); \draw[very thin] (4,3/2)--(7,3/2); \draw[very thin] (4,4/2)--(7,4/2); \draw[very thin] (4,5/2)--(7,5/2); \draw[very thin] (4,6/2)--(7,6/2); 
		\draw[very thin] (4,0)--(4,3); \draw[very thin] (9/2,0)--(9/2,3); \draw[very thin] (10/2,0)--(10/2,3); \draw[very thin] (11/2,0)--(11/2,3); \draw[very thin] (12/2,0)--(12/2,3); \draw[very thin] (13/2,0)--(13/2,3); \draw[very thin] (14/2,0)--(14/2,3); 
		
		\draw[double, very thick] (4,1/2)--(9/2,1/2)--(9/2,1)--(4,1)--cycle; \fill [lightgray] (11/2,2)--(12/2,2)--(12/2,5/2)--(11/2,5/2)--cycle; 
		\draw[very thick] (11/2,1/2)--(13/2,1/2); \draw[very thick] (11/2,2/2)--(13/2,2/2); \draw[very thick] (4,3/2)--(9/2,3/2); \draw[very thick](11/2,3/2)--(13/2,3/2); \draw[very thick] (4,4/2)--(9/2,4/2); \draw[very thick](11/2,4/2)--(13/2,4/2); \draw[very thick] (4,5/2)--(9/2,5/2); \draw[very thick](11/2,5/2)--(13/2,5/2);
		\draw[very thick] (4,3/2)--(4,5/2); \draw[very thick] (9/2,3/2)--(9/2,5/2); \draw[very thick] (11/2,1/2)--(11/2,1); \draw[very thick] (11/2,3/2)--(11/2,5/2); \draw[very thick] (12/2,1/2)--(12/2,1); \draw[very thick] (12/2,3/2)--(12/2,5/2); \draw[very thick] (13/2,1/2)--(13/2,1); \draw[very thick] (13/2,3/2)--(13/2,5/2); 
		
		\draw (15/2, 3/2) node {$\mapsto$}; 
		
		\fill[lightgray] (8,1)--(17/2,1)--(17/2,3/2)--(8,3/2)--cycle; \fill[lightgray] (17/2,2)--(9,2)--(9,5/2)--(17/2,5/2)--cycle; 
		\draw (8,1)--(19/2,1); \draw (8,3/2)--(19/2,3/2); \draw (8,2)--(19/2,2); \draw (8,5/2)--(19/2,5/2); \draw (8,1)--(8,5/2); \draw (17/2,1)--(17/2,5/2); \draw (9,1)--(9,5/2); \draw (19/2,1)--(19/2,5/2); 
		\draw (21/2, 3/2) node {$= \psi(\lambda)$}; 
	\end{tikzpicture}
	\end{figure}	
\end{itemize}
\end{definition}

\begin{proof}[Proof of $\cref{Excedance-theorem}$]
	We split the set $\mathcal{E}_n^k$ into disjoint subsets by the above conditions (1), (2), and (3). 
	
	(1) The function $\psi$ gives a bijection from $\{\lambda \in \mathcal{E}_n^k \mid \lambda(n+k+1) = n+k+1\}$ to $\mathcal{E}_n^{k-1}$, which preserves the weight. Thus, we have
	\[
		\sum_{\lambda \in \mathcal{E}_n^k|_{(1)}} x^{\welr(\lambda)} = \mathcal{E}_n^{k-1}(x).
	\]
	
	(2) In this case, the function $\psi: \{\lambda \in \mathcal{E}_n^k \mid (2)\} \to \mathcal{E}_n^{k-1}$ is $n$-to-$1$, which preserves the weight, i.e.,
	\[
		\sum_{\lambda \in \mathcal{E}_n^k|_{(2)}} x^{\welr(\lambda)} = n \mathcal{E}_n^{k-1}(x).
	\]
	
	(3) If $b = 1$, the function $\psi$ reduces the weight by one. In this case, we obtain
	\begin{align*}
		\sum_{\substack{\lambda \in \mathcal{E}_n^k|_{(3)} \\ b = 1}} x^{\welr(\lambda)} = x \sum_{a=1}^n \sum_{|D|=0}^{a-1} {a-1 \choose |D|} \mathcal{E}_{n-|D|-1}^{k-1} (x) = x \sum_{d=0}^{n-1} {n \choose d+1} \mathcal{E}_{n-d-1}^{k-1}(x).
	\end{align*}
	If $b > 1$, $\psi$ does not affect the weight. In a similar manner, we have
	\[
		\sum_{\substack{\lambda \in \mathcal{E}_n^k|_{(3)} \\ b > 1}} x^{\welr(\lambda)} = \sum_{2 \leq b \leq a \leq n} \sum_{|D|=0}^{a-b} {a-b \choose |D|} \mathcal{E}_{n-|D|-1}^{k-1} (x) = \sum_{d=0}^{n-2} {n \choose d+2} \mathcal{E}_{n-d-1}^{k-1}(x).
	\]
	By summation, we have the recurrence formula
	\[
		\mathcal{E}_n^k(x) = (n+1) \mathcal{E}_n^{k-1}(x) + x \sum_{d=0}^{n-1} {n \choose d} \mathcal{E}_{d}^{k-1}(x) + \sum_{d=1}^{n-1} {n \choose d-1} \mathcal{E}_d^{k-1}(x),
	\]
	which coincides with that in \cref{recurrence}.
\end{proof}


\subsection{Another proof by a combinatorial bijection}\label{s4-2}

First, we label the cracked chessboard as follows:
\begin{figure}[H]
\begin{tikzpicture}
	\fill [lightgray] (0,0)--(3/2,0)--(3/2,3/2)--(1,3/2)--(1,1)--(1/2,1)--(1/2,1/2)--(0,1/2)--cycle;
	\fill [lightgray] (3/2,7/2)--(6/2,7/2)--(6/2,6/2)--(5/2,6/2)--(5/2,5/2)--(4/2,5/2)--(4/2,4/2)--(3/2,4/2)--cycle;

	\draw (0,0)--(7/2,0); \draw(0,1/2)--(7/2,1/2); \draw(0,2/2)--(7/2,2/2); \draw(0,3/2)--(7/2,3/2); \draw[double](0,4/2)--(7/2,4/2); \draw(0,5/2)--(7/2,5/2); \draw(0,3)--(7/2,3); \draw(0,7/2)--(7/2,7/2);
	\draw (0,0)--(0,7/2); \draw (1/2,0)--(1/2,7/2); \draw (2/2,0)--(2/2,7/2); \draw[double] (3/2,0)--(3/2,7/2); \draw (4/2,0)--(4/2,7/2); \draw (5/2,0)--(5/2,7/2); \draw (6/2,0)--(6/2,7/2); \draw (7/2,0)--(7/2,7/2);
	\draw (1/4,-1/4) node {$\piros{n}$}; \draw (1/4+1/2,-1/4) node {$\piros{\cdots}$}; \draw (1/4+2/2,-1/4) node {$\piros{1}$}; \draw (1/4+3/2,-1/4) node {$\kek{k}$}; \draw (1/4+4/2,-1/4) node {$\kek{\cdots}$}; \draw (1/4+5/2,-1/4) node {$\kek{1}$}; \draw (1/4+6/2,-1/4) node {$\kek{0}$}; 
	\draw (-1/4, 1/4+0/2) node {$\piros{n}$}; \draw (-1/4, 1/4+1/2+1/8) node {$\piros{\vdots}$}; \draw (-1/4, 1/4+2/2) node {$\piros{1}$}; \draw (-1/4, 1/4+3/2) node {$\piros{0}$}; \draw (-1/4, 1/4+4/2) node {$\kek{k}$}; \draw (-1/4, 1/4+5/2+1/8) node {$\kek{\vdots}$}; \draw (-1/4, 1/4+6/2) node {$\kek{1}$}; 
\end{tikzpicture}
\end{figure}

For each double Callan permutation $(S_1,S_2) \in \mathcal{C}_n^k$, the placement of $n+k+1$ rooks on the chessboard are as follows:
We set $S=S_1\kek{0} S_2 \piros{0}$.
\begin{itemize}
	\item[(1)]For each adjacent same colored pair $\piros{xy}$ (resp. $\kek{xy}$) in $S$, place a rook on $\piros{y}$-row, $\piros{x}$-column (resp. $\kek{x}$-row, $\kek{y}$-column).
	\item[(2)]Let $S=x_1x_2\ldots x_{n+k+2}$. We perform the following operations in the order $i=1,2, \dots, n+k+1$:
	\begin{itemize}
		\item[(i)] If there is already a rook in the $x_i$-column, then we do nothing.
		\item[(ii)] If there is no rook in the $x_i$-column, then we place a rook at the $x_i$-column and the topmost row among the rows of a different color from $x_i$ without rooks.  
	\end{itemize}
\end{itemize}

\begin{example}
From the double Callan permutation $(S_1,S_2)=(\kek{21}\piros{3},\piros{2}\kek{3}\piros{1})$, we obtain the string $S=\kek{21}\piros{3}\kek{0}\piros{2}\kek{3}\piros{10}$. The corresponding placement of rooks is as follows:
\begin{figure}[H]
	\begin{tikzpicture}
		\fill [lightgray] (0,0)--(3/2,0)--(3/2,3/2)--(1,3/2)--(1,1)--(1/2,1)--(1/2,1/2)--(0,1/2)--cycle;
		\fill [lightgray] (3/2,2)--(2,2)--(2,5/2)--(5/2,5/2)--(5/2,6/2)--(6/2,6/2)--(6/2,7/2)--(3/2,7/2)--cycle;
		\draw (0,0)--(7/2,0);
		\draw (0,1/2)--(7/2,1/2);
		\draw (0,2/2)--(7/2,2/2);
		\draw (0,3/2)--(7/2,3/2);
		\draw (0,4/2)--(7/2,4/2);
		\draw (0,5/2)--(7/2,5/2);
		\draw (0,6/2)--(7/2,6/2); 
		\draw (0,7/2)--(7/2,7/2); 
		\draw (0,0)--(0,7/2);
		\draw (1/2,0)--(1/2,7/2);
		\draw (2/2,0)--(2/2,7/2);
		\draw (3/2,0)--(3/2,7/2);
		\draw (4/2,0)--(4/2,7/2);
		\draw (5/2,0)--(5/2,7/2);
		\draw (6/2,0)--(6/2,7/2); 
		\draw (7/2,0)--(7/2,7/2); 
		\draw (1/4,-1/4) node{$\piros{3}$};
		\draw (1/2+1/4, -1/4) node{$\piros{2}$};
		\draw (1+1/4, -1/4) node{$\piros{1}$};
		\draw (3/2+1/4, -1/4) node {$\kek{3}$};
		\draw (2+1/4, -1/4) node {$\kek{2}$};
		\draw (5/2+1/4, -1/4) node {$\kek{1}$};
		\draw (3+1/4, -1/4) node {$\kek{0}$};
		\draw (-1/4,1/4) node{$\piros{3}$};
		\draw (-1/4, 1/2+1/4) node{$\piros{2}$};
		\draw (-1/4, 2/2+1/4) node{$\piros{1}$};
		\draw (-1/4, 3/2+1/4) node {$\piros{0}$};
		\draw (-1/4, 4/2+1/4) node {$\kek{3}$};
		\draw (-1/4, 5/2+1/4) node {$\kek{2}$};
		\draw (-1/4, 6/2+1/4) node {$\kek{1}$};
		\draw (0/2+1/4,6/2+1/4) node {$\rook$};
		\draw (1/2+1/4,4/2+1/4) node {$\rook$};
		\draw (2/2+1/4,3/2+1/4) node {$\piros{\rook}$};
		\draw (3/2+1/4,0/2+1/4) node {$\rook$};
		\draw (4/2+1/4,2/2+1/4) node {$\rook$};
		\draw (5/2+1/4,5/2+1/4) node {$\kek{\rook}$};
		\draw (6/2+1/4,1/2+1/4) node {$\rook$};
	\end{tikzpicture}
\end{figure}
\end{example}

\begin{theorem}
The correspondence is well-defined and is a bijection $\mathcal{C}_n^k \to \mathcal{E}_n^k$.
\end{theorem}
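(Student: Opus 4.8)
The plan is to read the defining condition $E(\lambda)=[n]$ off the labelled cracked board, verify that the prescribed placement always produces a genuine (non-attacking) rook diagram lying in the allowed cells, and then invert the construction. First I would translate the excedance condition into a local rule: under the labelling, the first $n$ columns are exactly the red columns $\piros 1,\dots,\piros n$ (the excedance positions) and the remaining $k+1$ columns are the blue columns $\kek 0,\dots,\kek k$ (the non-excedance positions). A direct computation with the two relabellings then shows that a rook occupies an allowed cell precisely when a red column $\piros c$ carries a rook in a red row $\piros r$ with $r<c$ or in any blue row, and a blue column $\kek c$ carries a rook in a blue row $\kek r$ with $r>c$ or in any red row.

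With this dictionary the easy half of well-definedness is immediate. Each of the $n+k+1$ columns (all labels except $\piros 0$) receives exactly one rook: in Step~(1) a symbol is the distinguished endpoint of at most one monochromatic adjacency, and red (resp.\ blue) Step-(1) rooks sit in red (resp.\ blue) columns, so no column is hit twice; Step~(2) then fills each remaining column once and skips only the final symbol $\piros 0$, which is not a column label. Every rook also lies in an allowed cell: in Step~(1) the decreasing condition in \cref{S1S2-exp} forces $x>y$ in each pair $\piros{xy}$ and $\kek{xy}$, giving $r<c$ and $r>c$ respectively, while in Step~(2) the chosen row always has the colour opposite to its column, which is unconditionally allowed.

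The substantive point is that the diagram is non-attacking, i.e.\ each row also carries exactly one rook. Since there are $n+k+1$ rooks and $n+k+1$ rows, it suffices to show that no two rooks share a row; as the Step-(1) rooks already occupy distinct rows (a symbol is the right member of at most one red pair and the left member of at most one blue pair) and Step~(2) uses only empty rows, this reduces to showing that Step~(2) never stalls. Here I would use the global shape of $S=S_1\kek 0 S_2\piros 0$: it begins with a blue symbol and ends with $\piros 0$, so its maximal monochromatic runs alternate in colour and the numbers of red and blue runs coincide. Writing $P_r,P_b$ for the numbers of red--red and blue--blue adjacencies in $S$, this gives $P_r-P_b=(n+1)-(k+1)=n-k$, whence the number of Step-(2) rooks destined for blue rows equals the number of blue rows left free after Step~(1), and symmetrically for red rows. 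A short count then rules out a stall: a stall at a red column would force all $k$ blue rows to be filled while an empty red column still awaits processing, contradicting that the total number of empty red columns equals the number of blue rows to be filled by Step~(2). I expect this no-stall bookkeeping to be the main obstacle.

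For bijectivity I would build the inverse by reversing the two steps. Given a placement in $\mathcal E_n^k$, the rooks split by colour pattern: a monochromatic rook records a same-coloured adjacency and hence a partial-injective ``successor'' on each colour, whose chains are exactly the maximal monochromatic runs of $S$ together with their internal order, while the bichromatic rooks are the Step-(2) outputs and mark the run-ends and run-starts. Recovering $S$ then amounts to reconstructing how these runs interleave, which I would obtain by replaying the greedy ``topmost free opposite-colour row'' rule in reverse; splitting the recovered $S$ at its unique $\kek 0$ and its trailing $\piros 0$ returns the pair $(S_1,S_2)$, and checking that the two constructions are mutually inverse is then mechanical. Alternatively, since $|\mathcal C_n^k|=|\mathcal E_n^k|$ is already known from \cref{Callan-explicit} and \cref{Excedance-theorem}, it is enough to prove injectivity---that the placement determines both the runs and their interleaving---after which equicardinality yields surjectivity automatically.
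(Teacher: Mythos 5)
Your proposal is correct and follows essentially the same route as the paper's (very terse) proof: both rest on the run-count identity --- equal numbers of maximal red and blue runs in $S$, i.e.\ $n+1-r = k+1-b$ --- to guarantee that Step (2) can always be completed, on the strictly decreasing runs to keep Step-(1) rooks off the cracked squares, and on reconstructing the runs and their interleaving to define the inverse. Your write-up merely supplies the bookkeeping the paper leaves implicit (the allowed-cell dictionary, the no-stall count, and the reversal of the greedy topmost-row rule).
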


\begin{proof}
	If there are $r$ adjacent red pairs and $b$ adjacent blue pairs in the string $S$, then there are $n+1-r$ red substrings and $k+1-b$ blue substrings. By the definition of double Callan permutations, the equation $n+1-r = k+1-b$ holds.
	
	By the definition of Step (1), rooks are located on non-cracking squares. The equation $n-r = k-b$ guarantees the well-definedness of the map. Since we can define the inverse map, this map is bijective.
\end{proof}

Throughout the above bijection, the weight $\welr$ allows us to define a new weight $\wcRL$ for $\mathcal{C}_n^k$. 

\begin{definition}
For each $\lambda = (S_1, S_2) \in \mathcal{C}_n^k$, we consider the string $S = S_1 \kek{0} S_2 \piros{0}$ as before. Let $\ell$ be the number of blue substrings in $S$. We define the weight $\wcRL(\lambda)$ using the right-to-left maximum as follows:
\begin{itemize}
	\item[(1)]Consider the maximum of each blue substring in $S$ to obtain a sequence $\pi = \pi_1\cdots \pi_\ell$.
	\item[(2)]Count the number of $1 \leq i \leq \ell$ such that, if $i < j$, then $\pi_j<\pi_i$.
	\item[(3)]Subtract 1 from the number.
\end{itemize}	
\end{definition}

\begin{example}
For $\lambda = (\kek{6}\piros{21} \kek{4}\piros{4} \kek{5}, \piros{76}\kek{32} \piros{53}\kek{1})$, we have $S = \kek{6}\piros{21} \kek{4}\piros{4} \kek{50} \piros{76}\kek{32} \piros{53}\kek{1} \piros{0}$ and $\pi = \kek{64531}$. The weight of $\lambda$ is given by $\wcRL(\lambda) = \#\{\kek{6}, \kek{5}, \kek{3}, \kek{1}\} - 1 = 3$. Similarly, we have $\wcRL((\kek{21}\piros{3},\piros{2}\kek{3}\piros{1})) = \#\{\kek{3}\} -1 = 0$.

\end{example}

\begin{corollary}
	The above map defines a bijection $(\mathcal{C}_n^k, \wcRL) \to (\mathcal{E}_n^k, \welr)$, i.e., $\mathcal{C}_n^k (x; \wcRL) = \mathcal{E}_n^k (x; \welr)$.
\end{corollary}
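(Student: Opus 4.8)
The preceding theorem already supplies the bijectivity of the map $f\colon\mathcal{C}_n^k\to\mathcal{E}_n^k$, so the only remaining content of the corollary is the weight identity $\welr(f(\lambda))=\wcRL(\lambda)$ for every $\lambda=(S_1,S_2)$. The plan is to read off from the placement rules exactly which rook occupies each blue column of the cracked chessboard, and then to match the left-to-right minima counted by $\welr$ with the right-to-left maxima counted by $\wcRL$. Throughout, write $S=S_1\kek{0}S_2\piros{0}$ and let $\pi=\pi_1\cdots\pi_\ell$ be the sequence of maxima of the $\ell$ blue substrings of $S$, listed in the order the substrings occur.

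First I would unwind Steps (1) and (2). A blue column $\kek{y}$ receives a rook in Step (1) precisely when $\kek{y}$ is not the leftmost (largest) entry of its blue substring, and that rook then lies in a blue row. Hence the blue columns left empty after Step (1) are exactly the maxima $\kek{\pi_1},\dots,\kek{\pi_\ell}$. In Step (2) these are processed in the left-to-right order $\pi_1,\dots,\pi_\ell$ of the substrings, and each is filled by a rook in the topmost still-empty red row. Since in Step (2) red rows are claimed only by these max-columns and are never released, the successive placements occupy strictly lower red rows; equivalently the assigned values $v_1>v_2>\cdots>v_\ell$ strictly decrease with the processing index.

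The decisive observation is that red rows correspond to values $\le n+1$ while blue rows correspond to values $\ge n+2$. The leftmost blue column is $\kek{k}$, which, being the largest blue letter, is the maximum of its substring and therefore carries a red-row rook. Consequently the running minimum is $\le n+1$ from the very first blue column onward, so no blue-row rook can ever be a new minimum: every left-to-right minimum occurs at a max-column. Thus $\welr(f(\lambda))+1$ equals the number of left-to-right minima of the value sequence obtained by listing the max-columns $\kek{\pi_1},\dots,\kek{\pi_\ell}$ in decreasing order of their labels.

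Finally I would convert minima into maxima. Reading the max-columns by decreasing label and using $v_1>\cdots>v_\ell$, the column $\kek{\pi_r}$ is a left-to-right minimum exactly when every larger-labeled max-column $\kek{\pi_s}$ (that is, $\pi_s>\pi_r$) was processed earlier, i.e. $s<r$; equivalently $\pi_s<\pi_r$ for all $s>r$. This is precisely the condition that $\pi_r$ is a right-to-left maximum of $\pi$. Hence the two counts coincide, and subtracting $1$ on each side yields $\welr(f(\lambda))=\wcRL(\lambda)$. The main obstacle is the Step~(2) bookkeeping---verifying that only max-blue-columns claim red rows and that they do so greedily from the top, so that the assigned values are strictly decreasing---together with the order-reversal in this last matching; the singleton column $\kek{0}$ and the global $-1$ on each weight need only a routine check.
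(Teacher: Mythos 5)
Your argument is correct. Note that the paper gives no actual proof of this corollary: having introduced $\wcRL$ as the weight obtained by transporting $\welr$ through the bijection, it treats the agreement of that transported weight with the stated right-to-left-maximum description as evident, so your write-up is precisely the verification the paper leaves implicit. Each of your steps checks out — blue columns left empty by Step (1) are exactly the blue-substring maxima, Step (2) assigns them red rows greedily from the top (hence strictly decreasing values, since red elements only claim blue rows), the first blue column $\kek{k}$ always carries a red-row rook of value at most $n+1$ so every left-to-right minimum occurs at a max-column, and the label-order reversal converts those minima into the right-to-left maxima of $\pi$, with the $-1$ on both sides accounting for the guaranteed first minimum and maximum.
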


By direct enumeration of $\mathcal{C}_n^k$ with the weight $\wcRL$, we can see that $\mathcal{C}_n^k(x; \wcRL)$ satisfies \cref{SPB-exp}. Thus, again, we have $\mathcal{E}_n^k (x; \welr) = \widehat{\mathscr{B}}_n^k(x)$.

\section{An application and remarks}\label{s5}

Now, we have various pairs of combinatorial models, $(\mathcal{C}_n^k, \wclr)$, $(\mathcal{T}_n^k, \wtst)$, $(\widetilde{\mathcal{T}}_n^k, \wttl)$, $(\mathscr{T}_n^k, \wtc)$, $(\mathcal{E}_n^k, \welr)$, etc. These models all provide the same polynomial $\widehat{\mathscr{B}}_n^k(x)$ and have their own characteristics. In this last section, we provide an application of the model $(\widetilde{\mathcal{T}}_n^k, \wttl)$.


\subsection{Combinatorial explanation of the duality}\label{s4-3}

Although the definition of the symmetrized poly-Bernoulli polynomial \cref{SPB-exp} clearly implies the duality $\widehat{\mathscr{B}}_n^k(x) = \widehat{\mathscr{B}}_k^n(x)$, it is unclear from the definitions of our combinatorial polynomials. To explain the duality combinatorially, we introduce another weight function for the packed alternative tableau.
For each $\lambda \in \widetilde{\mathcal{T}}_n^k$, the weight $\wttd(\lambda)$ counts the number of rows that contain $\downarrow$ and $\leftarrow$ in the leftmost column. 

\begin{theorem}
	For any $n, k \geq 0$, we have $\widetilde{\mathcal{T}}_n^k(x; \wttl) = \widetilde{\mathcal{T}}_n^k(x; \wttd)$.
\end{theorem}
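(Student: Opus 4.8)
The plan is to realize the duality through a single geometric symmetry of packed alternative tableaux, namely reflection across the main diagonal combined with an interchange of the two arrow types. Place the $(n+1)\times(k+1)$ grid so that the added bottom row and added leftmost column meet at the bottom-left corner, and index a cell by $(r,c)$, where $r$ is its height from the bottom ($0 \le r \le n$) and $c$ is its distance from the left ($0 \le c \le k$). I would define a map $\tau$ sending the arrow in cell $(r,c)$ to cell $(c,r)$ of a $(k+1)\times(n+1)$ grid, while replacing every $\leftarrow$ by $\downarrow$ and every $\downarrow$ by $\leftarrow$. First I would check that $\tau$ is a well-defined map $\widetilde{\mathcal{T}}_n^k \to \widetilde{\mathcal{T}}_k^n$: under $(r,c)\mapsto(c,r)$ a cell lying to the left of a $\leftarrow$ becomes a cell lying below the resulting $\downarrow$, so the condition that all cells pointed at by an arrow are empty is preserved; the conditions ``exactly one $\leftarrow$ per ordinary row'' and ``exactly one $\downarrow$ per ordinary column'' are interchanged; and the bottom row ($r=0$, carrying only $\downarrow$) is sent to the leftmost column ($c=0$, now carrying only $\leftarrow$) and vice versa. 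Since applying the same rule again returns the original tableau, $\tau$ is a bijection.

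Next I would track the two statistics across $\tau$. A column of $\lambda$ contributing to $\wttl(\lambda)$ is one that carries a $\leftarrow$ and whose bottom cell ($r=0$) is $\downarrow$; under $\tau$ this becomes a row of $\tau(\lambda)$ that carries a $\downarrow$ and whose leftmost cell ($c=0$) is $\leftarrow$, which is exactly a row contributing to $\wttd(\tau(\lambda))$, and conversely. Hence $\wttd(\lambda) = \wttl(\tau(\lambda))$ for every $\lambda \in \widetilde{\mathcal{T}}_n^k$, which yields the identity $\widetilde{\mathcal{T}}_n^k(x;\wttd) = \widetilde{\mathcal{T}}_k^n(x;\wttl)$.

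To finish, I would combine this with the already established identity $\widetilde{\mathcal{T}}_n^k(x;\wttl) = \widehat{\scB}_n^k(x)$ (applied once as stated and once with $n$ and $k$ interchanged) together with the symmetry $\widehat{\scB}_n^k(x) = \widehat{\scB}_k^n(x)$, which is immediate from \cref{SPB-exp} because the factor $\sts{n+1}{j+1}\sts{k+1}{j+1}$ is symmetric in $n$ and $k$. Chaining the equalities gives $\widetilde{\mathcal{T}}_n^k(x;\wttl) = \widehat{\scB}_n^k(x) = \widehat{\scB}_k^n(x) = \widetilde{\mathcal{T}}_k^n(x;\wttl) = \widetilde{\mathcal{T}}_n^k(x;\wttd)$, as desired.

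The main obstacle here is conceptual rather than computational: the argument proves the equidistribution but routes it through the symmetric closed form, so it does not on its own furnish a genuinely self-contained combinatorial proof of the duality on the single set $\widetilde{\mathcal{T}}_n^k$. The more satisfying (and harder) goal would be a shape-preserving involution of $\widetilde{\mathcal{T}}_n^k$ that directly exchanges $\wttl$ and $\wttd$; this is delicate precisely because the rows and columns of a non-square rectangle are not interchangeable, so one cannot simply transpose. A secondary technical point to treat carefully is the asymmetric role of the added bottom row and leftmost column, which carry only one arrow type, versus the ordinary rows and columns, so that both the well-definedness check and the weight-tracking step must keep these two regimes separate.
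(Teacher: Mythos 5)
Your argument is logically sound: the transpose-and-swap map $\tau$ is indeed a well-defined bijection $\widetilde{\mathcal{T}}_n^k \to \widetilde{\mathcal{T}}_k^n$ with $\wttd(\lambda) = \wttl(\tau(\lambda))$, and chaining it with the earlier identity $\widetilde{\mathcal{T}}_n^k(x;\wttl) = \widehat{\scB}_n^k(x)$ and the symmetry of \cref{SPB-exp} proves the statement without circularity, since both inputs are established before and independently of this theorem. However, this is a genuinely different route from the paper's, and the difference is exactly the point of \cref{s4-3}. The paper proves the theorem by the shape-preserving involution that you flag as the ``more satisfying (and harder) goal'' and do not construct: for each column containing a $\leftarrow$ and a $\downarrow$ in the bottom row, slide the $\downarrow$ up to the position of the lowest $\leftarrow$ and move that $\leftarrow$ into the leftmost column, and symmetrically for each row containing a $\downarrow$ and a $\leftarrow$ in the leftmost column; the resulting map $f$ satisfies $\wttl(\lambda)=\wttd(f(\lambda))$ and never mentions $\widehat{\scB}_n^k(x)$. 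The payoff appears immediately after the theorem: the paper combines $f$ with your $\tau$ (its ``reflection'' remark, i.e., $\widetilde{\mathcal{T}}_n^k(x;\wttd) = \widetilde{\mathcal{T}}_k^n(x;\wttl)$) to \emph{derive} the duality $\widehat{\scB}_n^k(x) = \widehat{\scB}_k^n(x)$ combinatorially. Your proof runs that implication in the opposite direction, consuming the algebraic duality as an input; so while it establishes the displayed equality, it cannot replace the paper's proof within \cref{s4-3}, because the subsection's concluding claim --- a combinatorial explanation of the duality --- would then be circular. In short: correct, but your route buys the identity at the price of precisely the combinatorial content the theorem exists to supply, whereas the paper's sliding involution supplies both.
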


\begin{proof}
We construct a bijection (involution) $f: (\widetilde{\mathcal{T}}_n^k, \wttl) \to (\widetilde{\mathcal{T}}_n^k, \wttd)$.
For $\lambda \in \widetilde{\mathcal{T}}_n^k$, we define $f(\lambda)$ as follows:
	\begin{itemize}
		\item[(1)] Consider all columns containing $\leftarrow$ and $\downarrow$ in the bottom row. For each column, we slide the $\downarrow$ to the location of the lowest $\leftarrow$ and slide the lowest $\leftarrow$ to the leftmost column.
		\item[(2)] Consider all rows containing $\downarrow$ and $\leftarrow$ in the leftmost column. For each row, we slide the $\leftarrow$ to the location of the most-left $\downarrow$ and slide the most-left $\downarrow$ to the bottom row.
	\end{itemize}
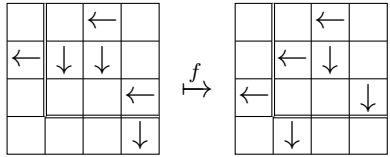
\begin{figure}[H]
	\begin{tikzpicture}
		\draw (-5/2,0)--(-1/2,0)--(-1/2,4/2)--(-5/2,4/2)--cycle;
		\draw (-5/2,1/2)--(-1/2,1/2);
		\draw[double] (-4/2,1/2)--(-1/2,1/2);
		\draw (-5/2,2/2)--(-1/2,2/2);
		\draw (-5/2,3/2)--(-1/2,3/2);
		\draw (-4/2,0)--(-4/2,4/2);
		\draw[double] (-4/2,1/2)--(-4/2,4/2);
		\draw (-3/2,0)--(-3/2,4/2);
		\draw (-2/2,0)--(-2/2,4/2);
		\draw (-5/2+1/4,2/2+1/4) node{$\leftarrow$};
		\draw (-2/2+1/4,1/2+1/4) node{$\leftarrow$};
		\draw (-3/2+1/4,3/2+1/4) node{$\leftarrow$};
		\draw (-3/2+1/4,2/2+1/4) node{$\downarrow$};
		\draw (-4/2+1/4,2/2+1/4) node{$\downarrow$};
		\draw (-2/2+1/4,0/2+1/4) node{$\downarrow$};
		
		\draw (0,1) node{$\xmapsto{f}$};
		
		\draw (3+-5/2,0)--(3-1/2,0)--(3-1/2,4/2)--(3-5/2,4/2)--cycle;
		\draw (3-5/2,1/2)--(3-1/2,1/2);
		\draw[double] (3-4/2,1/2)--(3-1/2,1/2);
		\draw (3-5/2,2/2)--(3-1/2,2/2);
		\draw (3-5/2,3/2)--(3-1/2,3/2);
		\draw (3-4/2,0)--(3-4/2,4/2);
		\draw[double] (3-4/2,1/2)--(3-4/2,4/2);
		\draw (3-3/2,0)--(3-3/2,4/2);
		\draw (3-2/2,0)--(3-2/2,4/2);
		\draw (1/2+1/4,1/2+1/4) node{$\leftarrow$};
		\draw (2/2+1/4,2/2+1/4) node{$\leftarrow$};
		\draw (3/2+1/4,3/2+1/4) node{$\leftarrow$};
		\draw (2/2+1/4,0/2+1/4) node{$\downarrow$};
		\draw (3/2+1/4,2/2+1/4) node{$\downarrow$};
		\draw (4/2+1/4,1/2+1/4) node{$\downarrow$};
	\end{tikzpicture}
	\caption{Example of the mapping.}
\end{figure}
The map is a well-defined involution.
Moreover, we have $\wttl(\lambda)=\wttd(f(\lambda))$ for all $\lambda \in \widetilde{\mathcal{T}}_n^k$.
\end{proof}

By reflecting packed alternative tableaux, we have $\widetilde{\mathcal{T}}_n^k(x; \wttl) = \widetilde{\mathcal{T}}_k^n(x; \wttd)$. Thus, we obtain the duality
\[
	\widetilde{\mathcal{T}}_n^k(x; \wttl) = \widetilde{\mathcal{T}}_k^n(s; \wttl).
\]
This provides a combinatorial interpretation of the duality formula.

\subsection{Concluding remarks}\label{s4-4}

Recently, B\'{e}nyi and the second author~\cite{BenyiMatsusaka2021-arXiv} and B\'{e}nyi--Ram\'{i}rez~\cite{BenyiRamirez2022} introduced combinatorial models for (non-symmetrized) poly-Bernoulli polynomials, poly-Euler numbers, and poly-Cauchy numbers based on the idea of Callan sequences. Using the interpretations, they provided combinatorial proofs for a large variety of known or new equations. It would be interesting to understand these polynomials and numbers using our various combinatorial models. As explained in this article, our combinatorial objects and weights have their own advantages. Do our models provide new aspects of these polynomials and numbers?

For each combinatorial set $\mathcal{P}$, there are many possibilities for weight functions. In our article, we defined three weights, $\wclr, \wcrb$, and $\wcRL$, for the set of double Callan permutations. As a similar phenomenon, for instance, Dumont--Foata~\cite{DumontFoata1976} introduced three weights for the set of (surjective) pistols. The corresponding three polynomials define the same polynomial, namely, the Gandhi polynomial. Furthermore, it is known that the Gandhi polynomial coincides with the anti-diagonal alternating sum of the symmetrized poly-Bernoulli polynomials~\cite{Matsusaka2020}. This result was shown indirectly by using the recurrence relations. Is it possible to provide its combinatorial proof? (See also B\'{e}nyi--Josuat-Verg\`{e}s~\cite{BenyiJosuatVerges2020}).


\bibliographystyle{amsplain}
\bibliography{References}

\end{document}